\definecolor{cerulean}{rgb}{0,.48,.65} 
\definecolor{magenta}{rgb}{.5,0,.5} 
\definecolor{dred}{rgb}{.5,0,0} 
\definecolor{green}{rgb}{0,.5,0} 
\definecolor{blue}{rgb}{0,0,0.5} 
\definecolor{black}{rgb}{0,0,0} 
\definecolor{dgreen}{rgb}{0,.3,0} 
\definecolor{vdred}{rgb}{.3,0,0} 
\definecolor{red}{rgb}{1,0,0} 
\definecolor{cornellred}{rgb}{0.7, 0.11, 0.11} 
\definecolor{salmon}{rgb}{0.98,0.50,0.45} 
\definecolor{gray}{rgb}{.5,.5,.5} 
\definecolor{seagreen}{rgb}{0.13,0.70,0.67} 
\definecolor{chartreuse}{rgb}{0.40,0.80,0.00}
\definecolor{cornflower}{rgb}{0.39,0.58,0.93} 
\definecolor{gold}{rgb}{0.80,0.68,0.00} 
\definecolor{csugreen}{rgb}{0.12,0.31,0.17}
\definecolor{csugold}{rgb}{0.78,0.76,0.45}
\numberwithin{equation}{section}
\newtheorem{thm}[equation]{Theorem}
\newtheorem*{thm*}{Theorem}
\newtheorem{prop}[equation]{Proposition}
\newtheorem{lemma}[equation]{Lemma}
\newtheorem{coro}[equation]{Corollary}
\theoremstyle{remark}
\newtheorem*{remark*}{Remark}
\newtheorem{remark}[equation]{Remark}
\theoremstyle{definition}
\def\punct{}
\newtheoremstyle{dotless}{3pt}{3pt}{\itshape}{}{\bfseries}{\punct}{.5em}{}
\theoremstyle{dotless}
\numberwithin{figure}{section}
\numberwithin{table}{section}
\let\hom\relax
\DeclareMathOperator{\hom}{Hom}
\DeclareMathOperator{\Hom}{Hom }
\DeclareMathOperator{\End}{End}
\DeclareMathOperator{\Alt}{Alt }
\DeclareMathOperator{\PSL}{PSL}
\DeclareMathOperator{\GL}{GL}
\DeclareMathOperator{\PSp}{PSp}
\DeclareMathOperator{\PSO}{PSO}
\DeclareMathOperator{\PSU}{PSU}
\DeclareMathOperator{\Gal}{Gal }
\DeclareMathOperator{\Isom}{Isom}
\DeclareMathOperator{\Aut}{Aut}
\DeclareMathOperator{\Prob}{Pr}
\title{Subgroups of simple groups are as diverse as possible}
\author{Martin Kassabov}
\address{
	Department of Mathematics\\
	Cornell University\\
	Ithaca, NY 14850\\
}
\email{martin.kassabov@cornell.edu}
\author{Brady A. Tyburski}
\address{
   Program in Mathematics Education\\
   Michigan State University\\
   East Lansing, MI 48824
}
\email{tybursk2@msu.edu}
\author{James B. Wilson}
\address{
	Department of Mathematics\\
	Colorado State University\\
	Fort Collins, CO 80523\\
}
\email{James.Wilson@ColoState.Edu}
\date{\today}
\thanks{This work was partially supported by NSF grants DMS-1620454 and DMS-1601406,
and the Simons Foundation, with thanks to the Hausdorff Institute for Mathematics
and Isaac Newton Institute (EPSRC Grant Number EP/R014604/1).}
\keywords{isomorphism, subgroups, conjugacy classes, enumeration}
\begin{document}

\begin{abstract}
   For a finite group $G$, let $\sigma(G)$ be the number of subgroups of $G$ and
   $\sigma_\iota(G)$ the number of isomorphism types of subgroups of $G$.

   Let $L=L_r(p^e)$ denote a simple group of Lie type, rank $r$, over a field of
   order $p^e$ and characteristic $p$.  If $r\neq 1$,
   $L\not\cong {}^2 B_2(2^{1+2m})$,
   there are constants $c,d$, dependent on the Lie
   type, such that as $re$ grows
   \begin{align*}
      p^{(c-o(1))r^4e^2}
      & \leq \sigma_{\iota}(L_r(p^e))
         \leq \sigma(L_r(p^e)) \leq p^{(d+o(1))r^4e^2}.
   \end{align*}
   For type $A$, $c=d=1/64$.  For other classical groups
   $1/64\leq c\leq d\leq 1/4$.  For exceptional and twisted groups
   $1/2^{100}\leq c\leq d\leq 1/4$. Furthermore,
   \begin{align*}
      2^{(1/36-o(1))k^2)}
      & \leq \sigma_{\iota}(\Alt_k)
         \leq \sigma(\Alt_k)\leq 24^{(1/6+o(1))k^2}.
   \end{align*}
   For abelian and sporadic simple groups $G$, $\sigma_{\iota}(G),\sigma(G)\in
   O(1)$. In general these bounds are best possible amongst groups of the same
   orders.  Thus with the exception of finite simple groups with bounded ranks
   and field degrees, the subgroups of finite simple groups are as
   diverse as possible.

\end{abstract}

\maketitle

\section{Introduction}
\label{chap:intro}

Recent and historic attention has considered the subgroups of finite simple
groups. In small cases these subgroups can be classified, at least up to
conjugacy, and for general finite simple groups the maximal subgroups have also
been detailed; cf.
\citelist{\cite{Dynkin:group}\cite{Aschbacher}\cite{KL}\cite{BHRD}}. Other works
consider intersections of maximal subgroups
\citelist{\cite{GG}\cite{BW:isom}\cite{BF}} which gives the potential to explore
all subgroups of finite simple groups.
Here we prove bounds on the total number
of distinct isomorphism types of subgroups within finite simple groups.  The
bulk of the variety is witnessed already by the nilpotent subgroups of finite
simple groups.

{\bf Notation.} Throughout this work $p$ is a prime and for a positive integer
$n$, $\nu_p(n)$ is largest $\nu$ such that $p^{\nu}| n$ and
$\mu(n)=\max\{\nu_p(n) : p|n\}$. For a finite group $G$ define $\sigma(G)$ as
the number of subgroups of $G$ and $\sigma_\iota(G)$ as the number of
isomorphism types of subgroups of $G$.  We note that
\begin{align*}
   \sigma_\iota(G)
      \leq \sigma(G)\leq n^{\mu(n)+1}\leq 2^{(\log n)^2}.
\end{align*}
The upper bounds follow from the cumulative work of Gasch\"utz, Kov\'acs,
Guralnick and Lucchini that proves a (sub)group of order $k|n$ is generated by a
set of size $\mu(k)+1\leq \mu(n)+1$ \cite{BNV:enum}*{Theorem~16.6}.

\begin{thm}\label{thm:simple} Let $L=L_r(p^e)$ denote a simple group of Lie type,
   rank $r$, over a field of order $p^e$ and characteristic $p$.
   If $L\not\cong \PSL_2(p^e)$ and $L\not\cong {^2 B_2}(2^{1+2m})$
   then there are constants
   $c,d$, dependent on the Lie type, such that
   \begin{align*}
      c \leq \liminf_{re\to \infty}
                  \frac{\log_p \sigma_{\iota}(L_r(p^e))}{r^4e^2}
         \leq \limsup_{re\to \infty}
                  \frac{\log_p \sigma(L_r(p^e))}{r^4e^2}
         \leq d.
   \end{align*}
   For type $A$, $c=d=1/64$.  For other classical groups
   $1/64\leq c\leq d\leq 1/4$.  For exceptional and twisted groups
   $1/2^{100}\leq c\leq d\leq 1/4$. Furthermore,
   \begin{align*}
      \frac{1}{36}
         \leq
            \liminf_{k\to \infty}
               \frac{\log_2 \sigma_{\iota}(\Alt_k)}{k^2}
         \leq
            \limsup_{k\to\infty}
               \frac{\log_2 \sigma(\Alt_k)}{k^2}
         \leq \frac{\log_2 24}{6}.
   \end{align*}
   For abelian and sporadic simple groups $G$, $\sigma_{\iota}(G),\sigma(G)\in
   O(1)$.
\end{thm}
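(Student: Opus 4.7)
The upper bound in the Lie-type case I would build on the general inequality $\sigma(G)\leq|G|^{\mu(|G|)+1}$ from the introduction. That alone yields $d=O(1)$, since $\log_p|L|=\Theta(r^2e)$ and $\mu(|L|)=\Theta(r^2e)$. To sharpen it I would invoke the classical estimate $\sigma(P)\leq p^{m^2/4+O(m)}$ for any $p$-group $P$ of order $p^m$, whose asymptotic equality is attained only by elementary abelian groups. The dominant contribution to $\sigma(L)$ therefore comes from subgroup counts inside the largest abelian subgroup of a Sylow $p$-subgroup; for type $A_r$ this is the unipotent radical of a maximal parabolic stabilizing a $\lceil(r+1)/2\rceil$-dimensional subspace, an elementary abelian group of $\mathbb{F}_p$-rank $\lfloor(r+1)^2/4\rfloor e$, whose subspace count is $p^{(r+1)^4e^2/64+o(r^4e^2)}$, yielding the claimed $d=1/64$. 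For the other classical types the analogous maximal-parabolic analysis (via Aschbacher's class $\mathcal{C}_1$) gives $d\leq 1/4$; exceptional and twisted groups are handled by the same framework with case-by-case bookkeeping.

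For the lower bound on $\sigma_\iota(L)$ the plan is to produce many pairwise non-isomorphic class-$2$ $p$-subgroups inside a Heisenberg-type unipotent radical $U$. I would fix a parabolic with Levi $\GL_{r_1}\times\GL_{r_2}\times\GL_{r_3}$, making $U$ a two-step nilpotent group with $U/[U,U]\cong V\oplus V'$ and $Z(U)=[U,U]\cong W$, carrying a block commutator pairing $V\times V'\to W$ in which each summand has $\mathbb{F}_p$-dimension $\Omega(r^2e)$. By the Baer correspondence (for odd $p$; Lazard for $p=2$), a subgroup $H\leq U$ containing $[U,U]$ is specified by a subspace of $V\oplus V'$ together with the restricted pairing, and the pairing is an invariant of the isomorphism type of $H$. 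Counting such subspaces modulo the natural action of $\GL(V)\times\GL(V')\times\GL(W)$ gives $p^{\Omega(r^4e^2)}$ pairwise non-isomorphic subgroups; optimizing the Levi block sizes yields the stated $c$ per Lie type. For alternating groups I would port this construction to the iterated-wreath-product Sylow $2$-subgroup of $S_{2^m}$ to obtain $2^{\Omega(k^2)}$ isomorphism types; the matching upper bound uses Legendre's formula to compute $\mu(k!/2)$ and the $p$-group bound, optimized over primes to give the constant $\log_2 24/6$. Sporadic groups are finite, and prime-order abelian simple groups have only two subgroups.

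The main obstacle is the lower bound for $\sigma_\iota$: one must show that distinct parameter choices yield non-isomorphic \emph{abstract} groups, not merely non-conjugate subgroups of $U$. Two subgroups $H_1,H_2\leq U$ are abstractly isomorphic iff their intrinsic pairings $H_i/Z(H_i)\times H_i/Z(H_i)\to[H_i,H_i]$ are equivalent under change-of-basis; bounding the number of $H_2$ isomorphic to a given $H_1$ therefore reduces to counting embeddings $H_1\hookrightarrow U$, which is controlled by $|\aut(H_1)|$ together with orbit-size estimates in the Levi's action on $U$. A secondary obstacle is attaining the sharp $d=1/64$ for type $A$ in the upper bound, which requires verifying that non-abelian subgroups contribute only $o(r^4e^2)$ to $\log_p\sigma(L)$, i.e., that the elementary-abelian extremum is essentially saturated by one Aschbacher class. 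For exceptional and twisted types the absence of such a clean block decomposition is what forces the weaker $c\geq 1/2^{100}$, reflecting a case-by-case analysis of the largest Heisenberg pairings inside each unipotent radical.
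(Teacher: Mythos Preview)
Your overall architecture (Higman-type lower bound inside a unipotent radical, Wall-type upper bound) matches the paper, but two of your key steps do not work as written, and the paper's fixes are substantively different from what you propose.

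\textbf{Lower bound.} Your plan is to take a three-block parabolic, count subspaces of $V\oplus V'$ above $W=[U,U]$, and divide by $|\GL(V)\times\GL(V')\times\GL(W)|$. With balanced block sizes $r_1\approx r_2\approx r_3$ each of these factors has order $p^{\Theta(r^4e^2)}$, the same scale as the number of subspaces, so the quotient is trivial; and optimizing over $(r_1,r_2,r_3)$ does not help, because the class-$2$ invariant you retain---the restricted alternating form into $W$---always carries a $\GL(W)$-action of order comparable to the domain count. The paper instead passes to a \emph{three-step} filtration $\Gamma_1>\Gamma_2>\Gamma_3>\Gamma_4$ of $\mathcal{H}_{abc}(K)$ with $a=b\approx d/2$ and $c$ bounded (often $c=1$), so that $L_1\cong\mathbb{M}_a(K)$ is large while $L_2\cong L_3\cong K^a$ are small, and counts only the ``kinder'' $\Gamma_2\le Q\le\Gamma_1$. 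The bracket $[\,,\,]_{1,2}:L_1\otimes L_2\to L_3$ is then the module action of the ring $\mathbb{M}_a(K)$ on $K^a$, and the real work (the ``module nursery'' Theorem in Section~2 and the genericity results of Section~3) is to show that for a generic kind $Q$ the subgroups $\Gamma_2,\Gamma_3$ are recoverable from $Q$ alone, so that isomorphisms lift to $\Aut(\mathcal{H}_{abc})$. This uses the adjoint ring $\End(\Phi_*)$ of the commutator tensor and a probabilistic statement that for random systems of rectangular matrices this ring collapses to $K$. No Baer or Lazard correspondence enters; the Lie-ring step is purely the associated graded of the filtration.

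\textbf{Upper bound.} Your route to the sharp constant in type $A$ is not an upper bound: counting subspaces of the largest elementary abelian section of a Sylow bounds $\sigma(L)$ from below, not above, and Wall applied naively to the full Sylow $U_{r+1}(p^e)$ gives an exponent of a different order than the one coming from that abelian section. The paper does not try to beat Wall on $U_d$; it reduces $\sigma(\GL_d(p^e))$ to a factor polynomial in $p^{d^2e}$ times $\sigma(U_d(K))$ via a Pyber-style argument (the Lemma in Section~4): Aschbacher--Guralnick writes every subgroup as $\langle S,g\rangle$ with $S$ solvable, P\'alfy bounds the conjugacy classes of maximal solvable subgroups, and Suprunenko's structure theory bounds $|M/O_p(M)|$ by $p^{O(d\log d)}$ for each such $M$. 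This solvable reduction is precisely the ``secondary obstacle'' you flag, and it is not handled by an Aschbacher-class analysis of a single parabolic.

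\textbf{Alternating groups.} The paper does not use the iterated-wreath Sylow $2$-subgroup. It embeds $\Gamma=S_3^k$ in $S_{3k}$, sets $\Gamma_2=C_3^k$, and observes that for $\Gamma_2\le H\le\Gamma$ the Hamming weight on $H/\Gamma_2\le\mathbb{F}_2^k$ is recoverable intrinsically from the conjugation action of $H$ on $\Gamma_2$; hence isomorphism types of such $H$ correspond to binary codes up to code equivalence, giving the constant $1/36$ directly. Pyber's upper bound $24^{(1/6+o(1))k^2}$ is quoted rather than reproved.
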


An alternative formulation of the bounds is offered in the abstract and neither
estimate is as precise as the bounds proved herein.  Also within our
characterization is the requirement that at least one of $r$ or $e$ grow, which
further excludes groups like $\PSL_3(p)$ where only the prime $p$ will grow.  In
general we have not considered the effects of varying primes, and for small rank
and degrees our estimates are likely far from tight, especially for exceptional
and twisted groups.  Remarks~\ref{rem:small-rank-1} \& \ref{rem:best-bounds-U}
speak to the limits of our counting method. We note that
$\sigma_{\iota}(\PSL_2(p^e))\in p^{\Theta(e)}$ whereas $\sigma(\PSL_2(p^e))\in
p^{\Theta(e^2)}$. So the omission of the groups $\PSL_2(p^e)$ form
Theorem~\ref{thm:simple} is required.

On the other hand we expect that the Suzuki groups ${}^2 B_2(2^{1+2m})$ can be
included in the statement of Theorem~\ref{thm:simple}. Indeed, we can prove that
these groups have many subgroups provided that there exists a small dimensional
subspace in the field $\mathbb{F}_{2^{1+2m}}$ which ``generates'' the field
under some operation---unlike other simple groups, in the Suzuki case the
commutator map in the Sylow $2$-group is not related to the field multiplication
which prevents us from using standard techniques to prove the existence of such
subspaces for general $e=1+2m$. However, we were able to exhibit such  a
subspace for $e<1000$.  Hence, we believe that the class of Suzuki groups will
eventually be included in a statement like that of Theorem~\ref{thm:simple}.

As a consequence of our proof of Theorem~\ref{thm:simple} we also bound general
classes $\mathfrak{X}_n$, resp. $\mathfrak{X}_{\leq n}$, of groups of order $n$,
resp. at most order $n$.  We extend $\sigma$ to
$\sigma(\mathfrak{X}_n)=\max\{\sigma(G)\mid G\in \mathfrak{X}_n\}$ and
$\sigma(\mathfrak{X}_{\leq n})$.  When $\mathfrak{X}_n$ is all groups of order
at most $n$ we write simply $\sigma(n)$. Do likewise with $\sigma_\iota$.
\begin{thm}\label{thm:sigma-iota-general}
   For the class $\mathfrak{N}_n$ of nilpotent groups of order $n$,
   \begin{align*}
      \sigma_{\iota}(\mathfrak{N}_n),\sigma(\mathfrak{N}_n)\in \prod_{p|n}p^{\nu_p(n)^2/4+\Theta(\nu_p(n))}.
   \end{align*}
   Indeed, there are at least $\prod_{p|n} p^{\nu_p(n)^3/108+\Theta(\nu_p(n)^2)}$ pairwise
   non-isomorphic groups $G$ of order $n$ having
   $\sigma_{\iota}(G)\in \prod_{p|n} p^{\nu_p(n)^2/36+\Omega(\nu_p(n))}$.
   As a consequence for the class of nilpotent groups of order at most $n$ we have
   $$
      \sigma_{\iota}(\mathfrak{N}_{\leq n}),\sigma(\mathfrak{N}_{\leq n})\in 2^{(\log_2 n)^2/4  +\Theta(\log_2(n))}.
   $$
\end{thm}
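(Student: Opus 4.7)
The plan proceeds in three steps: reduce to the $p$-group case via the Sylow decomposition, invoke the classical $p^{\nu^2/4}$ subgroup bound for $p$-groups of order $p^\nu$ (matched by the elementary abelian example), and then produce a large family of pairwise non-isomorphic $p$-groups each with many isomorphism types of subgroups by an alternating-bilinear-form construction analogous to the Sylow analysis used elsewhere in this paper.

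A finite nilpotent group $G$ is the internal direct product of its Sylow subgroups $G_p$, and because these factors have pairwise coprime orders every subgroup $H \leq G$ splits uniquely as $H = \prod_p H_p$ with $H_p \leq G_p$. Hence $\sigma(G) = \prod_p \sigma(G_p)$ and $\sigma_\iota(G) = \prod_p \sigma_\iota(G_p)$, so all three statements factor prime-by-prime into claims for $p$-groups of order $p^{\nu_p(n)}$. The upper bound $\sigma(G) \leq p^{\nu^2/4 + O(\nu)}$ for a $p$-group of order $p^\nu$ is classical --- it can be deduced from the Gasch\"utz--Kov\'acs--Guralnick--Lucchini generating bound cited in the introduction, or from the folklore fact that the elementary abelian group maximizes $\sigma$ amongst $p$-groups of fixed order --- and the matching lower bound $\sigma(\mathfrak{N}_n) \geq p^{\nu^2/4 - O(\nu)}$ is realized by $(\mathbb{Z}/p)^\nu$ via the Gaussian binomial count $\sum_k \binom{\nu}{k}_p \sim p^{\nu^2/4}$. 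Since all subspaces of a given dimension are isomorphic, this same example does not witness the $\sigma_\iota$ lower bound, which motivates the nonabelian construction below.

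The crux, and the main obstacle, is the simultaneous construction of many non-isomorphic groups each with many subgroup isomorphism types. I would use the Baer correspondence to encode every class-$2$ exponent-$p$ group of order $p^\nu$ with abelianization of $\mathbb{F}_p$-dimension $a$ and commutator subgroup of dimension $b = \nu - a$ by an alternating map $\beta\colon V \wedge V \to W$, noting that two such maps give isomorphic groups iff they lie in the same $\GL(V) \times \GL(W)$-orbit. A Higman--Sims style parameter count at the optimal split of $a$ and $b$ produces $p^{\nu^3/108 + \Theta(\nu^2)}$ inequivalent forms, accounting for the constant $108$; on the other hand the isomorphism type of a subgroup $(U, Y)$ with $U \leq V$, $Y \leq W$, and $\beta(U \wedge U) \leq Y$ is governed by the restricted form $\beta|_U\colon U \wedge U \to Y$, and a Schwartz--Zippel style genericity argument in the space of alternating maps shows that a positive fraction of $\beta$ simultaneously (a) lie in distinct $\GL(V) \times \GL(W)$-orbits and (b) produce $\geq p^{\nu^2/36 + \Omega(\nu)}$ pairwise non-isomorphic restricted forms at subspaces of intermediate dimension, accounting for the constant $36$. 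The $\mathfrak{N}_{\leq n}$ statement follows by maximizing $\sum_{p \mid m} \nu_p(m)^2 \log_2 p / 4$ over $m \leq n$; the optimum is attained by concentrating the order at $p = 2$, giving $2^{(\log_2 n)^2/4 + \Theta(\log_2 n)}$. The hardest step is the simultaneous genericity argument --- ensuring one family of forms witnesses both pairwise non-isomorphism and the many-subgroup-types property --- which I expect to handle by intersecting two dense-open conditions in the variety of alternating maps, reusing the orbit-counting machinery that the paper develops to bound $\sigma_\iota$ for Sylow subgroups of classical Lie-type simple groups.
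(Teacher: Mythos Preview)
Your Sylow reduction and the upper bounds are correct and match the paper (which invokes Wall's theorem for the latter). The derivation of the $\mathfrak{N}_{\leq n}$ statement by concentrating at $p=2$ is also fine.

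There are, however, two genuine gaps.

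\textbf{The $\sigma_\iota \geq p^{\nu^2/4}$ lower bound is missing.} You correctly note that the elementary abelian group fails for $\sigma_\iota$ and defer to your nonabelian construction, but that construction only promises $p^{\nu^2/36}$. The paper reaches $p^{\nu^2/4}$ with a separate choice of parameters: take the \emph{full} generalized Heisenberg group $S_p=\mathcal{H}_{a,a,c}(\mathbb{F}_p)$ with $c>1$ constant, so that $|S_p|=p^{a^2+O(a)}$ and Corollary~\ref{coro:U-count} gives $\sigma_\iota(S_p)\geq p^{a^4/4-O(a^2)}=p^{\nu^2/4-O(\nu)}$.

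\textbf{The route to the second claim is different from the paper's, and the key step is not justified.} The paper does not use generic alternating forms $V\wedge V\to W$. It stays inside a single fixed $\mathcal{H}_{a,a,c}(\mathbb{F}_p)$ (now with $a=b=\lfloor\nu/(3c)\rfloor$), takes for the many groups $K$ the kinder $\Gamma_2<K\leq\Gamma_1$ with $[K:\Gamma_2]=p^{\ell}$, $\ell\approx\nu/3$, and for the many subgroup-types of each $K$ the further kinder $\Gamma_2<Q\leq K$. Both counts rest on Theorem~\ref{thm:generic}: for generic kinder, any abstract isomorphism $Q\cong\tilde Q$ extends to an automorphism of $\mathcal{H}_{a,a,c}$, so the isomorphism type of $Q$ is determined by the $\Aut(\mathcal{H}_{a,a,c})$-orbit of $Q/\Gamma_2\subset\mathbb{M}_{a\times a}(\mathbb{F}_p)$. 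That lifting is proved via the rings $\End(\Lambda_*)$ and $\mathcal{R}_Q$ (Corollary~\ref{coro:prob-end}, Proposition~\ref{prop:generic-R}), which for the matrix-multiplication bilinear map are generically $\mathbb{M}_2(K)$ and $\mathbb{M}_c(K)$; this is what pins down the embedding of $Q/\Gamma_2$ up to a group of size $p^{O(a^2)}$.

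A ``Schwartz--Zippel style'' argument does not furnish this. Schwartz--Zippel bounds the probability that a random point lies on a fixed subvariety; it says nothing about how many $\GL(U)\times\GL(Y)$-orbits the restrictions $\beta|_U$ occupy as $U$ varies. Equivalently, ``two subspaces give isomorphic restrictions'' is not an open-or-closed condition on $\beta$ that you can intersect away. What you actually need is a statement of the form ``isomorphisms between subgroups lift to automorphisms of the ambient group (generically)'', and for a \emph{generic} alternating form there is no evident ring-theoretic invariant playing the role of $\mathcal{R}_Q$ to force this. The machinery you propose to ``reuse'' from the paper is built specifically around matrix multiplication, not around arbitrary alternating maps; transplanting it to your setting is precisely the missing work.
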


For convenience we confine all our calculations to the context of finite simple
groups but our method extends to almost simple, quasi-simple, algebraic, and
Steinberg groups.  In the latter cases one may replace cardinality with
dimensions of varieties.  If one assume the Classification of Finite Simple
Groups (CFSG) (\cite{CFSG}) then Theorem~\ref{thm:simple} indeed concerns all
simple groups, but we have no explicit dependence on that theorem within our
proof.

\subsection*{Acknowledgements}
We are grateful to Bob Guralnick and Bill Kantor for answers to questions on
simple groups.  Part of this work comprised the Master's
Thesis at Colorado State University of the second author \cite{Tyburski}.

\subsection{Proof overview.}

Throughout $K=\mathbb{F}_q$ is a finite field of characteristic $p$ and order
$q=p^e$, and $k=\mathbb{F}_p$.  
Abbreviate $\otimes:=\otimes_{\mathbb{Z}}$ and $\hom(-,-):=\hom_{\mathbb{Z}}(-,-)$.
We identify biadditive maps $U\times V\to W$ with linear maps $U\otimes V\to W$.
We let $\End(V)$ denote endomorphisms and $\Aut(V)$ automorphisms.

To count subgroups of a finite group $G$ the usual strategy is linear algebra.
Select a large $p$-elementary abelian section $L_1=\Gamma_1/\Gamma_2$ of $G$
were the counting reduces to counting subspaces of a vector space.  If $\log_p
|L_1|\in \Theta(\log_p |G|)$ then the number of subspaces matches the targeted
quantity of $p^{\Theta(\nu(|G|)^2)}$.  Such a count gives no clue about the
possible range of isomorphism classes of subgroups and indeed if the only large
$p$-elementary abelian sections have $\Gamma_2=1$ then this process surveys just
$O(\log_p |G|)$ isomorphism types.

To obtain a larger number of isomorphism classes we appeal to multi-linear
algebra.  First we select a series $\Gamma_1>\Gamma_2>\Gamma_3>\cdots$ arranged
into a filter in the sense that for all $i,j$, $[\Gamma_i,\Gamma_j]\leq
\Gamma_{i+j}\leq \Gamma_i\cap \Gamma_j$. This allows for the creation of an
associated graded Lie algebra $L=\bigoplus_i \Gamma_i/\Gamma_{i+1}$, with brackets
$[,]_{i,j}: L_i \otimes L_j \to L_{i+j}$ induced from commutation in the group $\Gamma_1$.

We focus specifically in filters where $d_i:=\log_p |L_i|$ satisfies $d_1^2/4\gg
d_2^2+d_3^2$, which guarantees a larger number of subgroups $Q\leq \Gamma_1$
containing $\Gamma_2$. We can restrict $[,]_{1,2}$ to $Q$ and obtain a map
$Q/\Gamma_2\otimes L_2\to L_3$. Said another way, one can now consider subspaces
$Q/\Gamma_2 \hookrightarrow L_1 \to \hom(L_2,L_3)$ under the action of
automorphisms $\Aut(L_2)\times \Aut(L_3)$.  Now the intuition of the count
becomes clear. There are $p^{\ell (d_1-\ell)}$ subspaces of dimension
$\ell=\mathrm{rank}~Q/\Gamma_2$. Meanwhile $\Aut(L_2)\times \Aut(L_3)$ has
only $d_2^2+d_3^2$ parameters.  For $\ell\approx d_1/2$, $\ell(d_1-\ell)\gg d_2^2+d_3^2$;
thus, there are many orbits of subspaces in $L_1$ under the action of the
automorphism group. Such a count is a variation on the method introduced by
Higman to estimate the number of isomorphism types of finite
$p$-groups~\cite{BNV:enum}*{Chapter~4}.

The subtlety hidden here is that upon restricting to subgroups $Q\leq \Gamma_1$
containing $\Gamma_2$ there is no immediate requirement that such $Q$ will
determine $\Gamma_i$, for $i>1$ calling into question why isomorphisms between
such $Q$ should restrict to isomorphisms of the biadditive maps
$Q/\Gamma_2\otimes L_2\to L_3$.  Indeed this is not true in general.

We will focus on the cases where most subgroups $Q$ determine the subgroups
$\Gamma_i$.  For general bounds we look for examples where $[,]_{1,2}:R\otimes
M\to M$, where $R$ is an algebra and $M$ is a left $R$-module.
We consider subgroups $Q$ together with some additional data,
that can be used to reconstruct the subgroups
$\Gamma_i$. This extra data leads to significant under count but is
straight-forward. To obtain counts that apply in small ranks we instead recover
appropriate rings that act on $[,]_{1,2}$ but are not part of the commutation
themselves.

Upper bounds are established by a result of Wall~\cite{Wall} that shows that the
number of $p^k$-order subgroups of a group of order $p^n$ is at most the
number of $k$-dimensional subspaces of $\mathbb{F}_p$-vector spaces of dimension
$n$. This combined with structure of maximal solvable groups of matrix groups
affords a tight bound in type $A$ and a suitable bound for other groups.

\section{Module Nurseries \& Kinder}\label{sec:nurseries}

A filtration $\Gamma_1>\Gamma_2>\Gamma_3>\cdots$ of subgroups satisfies
$[\Gamma_i,\Gamma_j]\leq \Gamma_{i+j}\leq \Gamma_i\cap\Gamma_j$.  Thus,
$L_i=\Gamma_i/\Gamma_{i+1}$ forms a graded Lie ring $L=\bigoplus_i L_i$. Call
this filtration a \emph{nursery} if $L_1$ has more subspaces than the size of
$\Aut(L_2)\times \Aut(L_3)$.  By a \emph{module} nursery we mean there is an
(unital) associative ring $R$, a faithful (left) $R$-module $M$, and
isomorphisms $\alpha: L_1 \to R$; $\beta: L_2 \to M$ and $\gamma: L_3 \to M$
where
\begin{align}\label{eq:alpha-beta-gamma}
    \gamma([u\Gamma_2,v\Gamma_3]\Gamma_4) & = \alpha(u\Gamma_2)\beta(v\Gamma_3).
\end{align}
We call the nursery \emph{exact} if $\Gamma_4 = [\Gamma_2,\Gamma_2]$, this
condition is automatically satisfied if $\Gamma_4 = 1$.  A primary example
of nurseries are the generalized Heisenberg groups $\mathcal{H}_{abc}(K)$
defined as the block-upper triangular matrices inside $\GL_d(K)$, $d=a+b+c$:

\begin{align}\label{def:hei}
    \mathcal{H}=\mathcal{H}_{abc}(K) & =
    \left\{
        \begin{bmatrix}
            I_a & U & W \\ 0 & I_b & V \\ 0 & 0 & I_c
        \end{bmatrix}
        ~\middle|~
        \begin{array}{rcl}
            U & \in & \mathbb{M}_{a\times b}(K)\\
            V & \in & \mathbb{M}_{b\times c}(K)\\
            W & \in & \mathbb{M}_{a\times c}(K)
        \end{array}
    \right\}
\end{align}

Suppose $a\geq c$.  Then these groups have a filtration defined by
\begin{align*}
    \Gamma_1=\mathcal{H} >
    \Gamma_2=
    \left\{
        \begin{bmatrix}
            I_a & 0 & W \\ 0 & I_b & V \\ 0 & 0 & I_c
        \end{bmatrix}
    \right\}
    >
    \Gamma_3=
    \left\{
        \begin{bmatrix}
            I_a & 0 & W \\ 0 & I_b & 0 \\ 0 & 0 & I_c
        \end{bmatrix}
    \right\}
    > \Gamma_{c\geq 4} = 1.
\end{align*}
If $a=b$ then $L_1\cong \mathbb{M}_a(K)$, $L_2\cong L_3\cong \mathbb{M}_{a\times
c}(K)$ and $[,]_{1,2}:L_1\otimes L_2\to L_3$ is equivalent to the module action
of $R=\mathbb{M}_a(K)$ on the left of $\mathbb{M}_{a\times c}(K)$.  If $a>2c$
then the number of subspaces of $L_1$ is at least $p^{a^4/4}$ while
$\Aut(L_2)\times \Aut(L_3)$ has order at most  $2p^{a^2 c^2}$; so, these are
exact module nurseries.

Already observed in \cite{Wilson:profiles}*{Section~3}, for each $\nu\geq 3$,
the groups $\mathcal{H}_{abc}(k)$ with $a=b=\lceil\nu/3\rceil$, $c=1$ contain
$p^{\nu^3/27+\Theta(\nu^2)}$ pairwise non-isomorphic subgroups of order $p^{\nu}$. So
these groups have a diverse family of subgroups but we shall need many more
subgroups to obtain meaningful lower bounds. The main result in this section is
to generalize such counts by replacing generalized Heisenberg groups with the
concept of exact module nurseries. 

Throughout this and the next section we concentrate on subgroups $Q$ where
$\Gamma_2\leq Q\leq \Gamma_1$ for a nursery $\Gamma_*$.  We call these
subgroups \emph{kinder} (or \emph{kind} for one).

\begin{thm}\label{thm:nursery-count} Let $\Gamma_1>\Gamma_2>\Gamma_3>\cdots$ be
    an exact module nursery for a $k$-algebra $R$ and $R$-module
    $M$. Fix a
    generating set ${\sf S}$ containing $1$ of $R$ as a $k$-algebra, and a set
    ${\sf T}\subset M$ such that $\bigcap_{x\in {\sf T}}\mathrm{Ann}_R(x)=0$.

    Then the number of isomorphism types of kinder $Q$ such that
    ${\sf S} \subset Q/\Gamma_2$ and $|Q/\Gamma_2| = p^\ell$ is at least
    \begin{align*}
        p^{(\ell-s) (r - \ell) - \ell s - mt}
    \end{align*}
    where $|R|=p^r$, $s=|{\sf S}|$, $|M|=p^m$, and $t=|{\sf T}|$.
\end{thm}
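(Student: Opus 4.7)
My plan is a standard Higman-type counting argument: count the kinder $Q$ with ${\sf S} \subset Q/\Gamma_2$ and $|Q/\Gamma_2| = p^\ell$, bound the size of each isomorphism class among them, and divide. Via the fixed isomorphism $\alpha: L_1 \to R$, each kind $Q$ is determined by its image $\bar Q := \alpha(Q/\Gamma_2)$, an $\ell$-dimensional $k$-subspace of $R$ containing ${\sf S}$. By the Gaussian binomial bound, the number of such $\bar Q$ is at least $\binom{r-s}{\ell-s}_p \geq p^{(\ell-s)(r-\ell)}$, which accounts for the main positive term in the theorem's exponent.

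To bound the isomorphism-class size, I would show that a group isomorphism $\phi: Q \to Q'$ between kinder induces an equivalence of the restricted commutator bimaps $B_Q: \bar Q \otimes M \to M$ and $B_{Q'}: \bar{Q'} \otimes M \to M$, both obtained by restricting the $R$-action on $M$ from \eqref{eq:alpha-beta-gamma}. The key technical hurdle is that $\phi$ must respect the filtration $Q \geq \Gamma_2 \geq \Gamma_3 \geq \Gamma_4$. Here the exactness condition $\Gamma_4 = [\Gamma_2, \Gamma_2]$ is essential, as is the hypothesis $1 \in {\sf S} \subset \bar Q$: lifting $1$ to $\tilde 1 \in Q$ yields $[\tilde 1, \Gamma_2]\cdot \Gamma_4 = \Gamma_3$, so the commutator structure of $Q$ recovers $\Gamma_3/\Gamma_4$ and then $\Gamma_4$ from $[\Gamma_2,\Gamma_2]$.

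Having reduced to counting equivalence classes of these bimaps, I parameterize an equivalence $(f, g, h)$ satisfying $h(r \cdot m) = f(r) g(m)$ (with $f: \bar Q_0 \to \bar Q$ and $g, h: M \to M$) by the two pieces of data $f|_{\sf S}$ and $g|_{\sf T}$. Setting $r = 1$ forces $h = f(1) \cdot g$, so $h$ is determined. For general $r \in \bar Q_0$ and $x \in {\sf T}$, the relation $g(r \cdot x) = f(1)^{-1} f(r) g(x)$, combined with the faithfulness hypothesis $\bigcap_{x \in {\sf T}} \mathrm{Ann}_R(x) = 0$ (equivalently the injection $R \hookrightarrow M^t$, $r \mapsto (r \cdot x)_{x \in {\sf T}}$), determines $f$ on all of $\bar Q_0$ from $g|_{\sf T}$. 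The values $f(s)$ for $s \in {\sf S}$ lie in the $\ell$-dimensional subspace $\bar Q$, giving at most $p^{\ell s}$ choices; the values $g(x)$ for $x \in {\sf T}$ lie in $M$, giving at most $p^{mt}$. Hence each isomorphism class contains at most $p^{\ell s + mt}$ kinder, and dividing yields the stated lower bound.

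The main obstacle is the second step: showing that a group isomorphism of kinder really does preserve the filtration well enough to induce a bimap equivalence. This requires carefully exploiting exactness together with the role of ${\sf S}$ (needed to recover $\Gamma_3$ group-theoretically) and of ${\sf T}$ (needed to recover $f$ from $g$). Secondary subtleties include handling elements $f(1) \in R$ that do not act invertibly on $M$ --- one may either normalize $f(1) = 1$ after composing with a suitable unit auto or treat the kernel separately --- and noting that when ${\sf S}$ is not linearly independent the bound $\binom{r-s}{\ell-s}_p \geq p^{(\ell-s)(r-\ell)}$ still holds a fortiori because the $k$-span of ${\sf S}$ has dimension at most $s$.
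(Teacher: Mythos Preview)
Your overall strategy---count the kinder containing ${\sf S}$, bound each isomorphism class by $p^{\ell s + mt}$, and divide---is exactly the paper's, and your subspace count and final arithmetic are correct. The gap is in your second step. You assert that an abstract group isomorphism $\phi:Q\to Q'$ between kinder respects the filtration, in particular $\phi(\Gamma_2)=\Gamma_2$, and then pass to an equivalence $(f,g,h)$ of bimaps. But this is precisely what the paper flags as false in general (see the proof overview: ``there is no immediate requirement that such $Q$ will determine $\Gamma_i$ \ldots\ Indeed this is not true in general''). Your proposed fix only recovers $\Gamma_3$ and $\Gamma_4$ \emph{from} $\Gamma_2$ via $[\tilde 1,\Gamma_2]\Gamma_4=\Gamma_3$ and $\Gamma_4=[\Gamma_2,\Gamma_2]$; it never recovers $\Gamma_2$ itself, so the passage to $(f,g,h)$ is unjustified.

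The paper sidesteps this by counting not bare kinder but \emph{marked} triples $(Q,\rho,\mu)$, where $\rho:{\sf S}\to Q$ and $\mu:{\sf T}\to Q$ are chosen lifts, and showing that the marks alone reconstruct the entire filtration inside $Q$. The decisive step you are missing is that the elements $\mu(t)$ pin down $\Gamma_2$: one sets $X=\bigcap_{t\in{\sf T}}\{q\in Q:[q,\mu(t)]\in\Gamma_4\}$ and proves $X=\Gamma_2$, and \emph{this} is where the hypothesis $\bigcap_{x\in{\sf T}}\mathrm{Ann}_R(x)=0$ enters. Only after $\Gamma_2$ is recovered does one proceed, as you do, to $\Gamma_3=[\rho(1),\Gamma_2]$, $\Gamma_4=[\Gamma_2,\Gamma_2]$, and the embedding $Q/\Gamma_2\hookrightarrow R$. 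The factor $p^{mt}$ then arises as the number of choices of the marking $\mu$, not from parameterizing $g|_{\sf T}$ after the filtration is already assumed preserved. Your use of the annihilator condition---to recover $f$ from $g|_{\sf T}$ via faithfulness---is a valid observation, but it presupposes exactly the filtration-compatibility that is in doubt.
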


\begin{proof}
    Fix a kind $Q$. Recall the meaning of $(\alpha,\beta,\gamma)$ from
    \eqref{eq:alpha-beta-gamma}. Fix transversals $V_i\leq \Gamma_i$ for
    $\Gamma_i/\Gamma_{i+1}$. We consider the tuples $(Q,\rho,\mu)$, where
    $Q\leq \Gamma_1$ containing $\Gamma_2$, $\rho:{\sf S}\to V_1\cap Q$ ($\rho$
    for ring) where $\alpha(\rho(s)\Gamma_2)=s$ for all $s\in {\sf S}$ and
    $\mu:{\sf T}\to V_2$ ($\mu$ for module) such that $\beta(\mu(t)\Gamma_3))=t$
    for all $t \in {\sf T}$. We claim that the data $(Q,\rho,\mu)$ is enough to
    reconstruct each $\Gamma_i$.\footnote{Equivalently consider mappings
    $[\rho]:{\sf S}\to Q/\Gamma_2$ and $[\mu]:{\sf T}\to Q/\Gamma_3$ and show
    the construction is unaffected by choice of coset representatives.}

    Set $X = \bigcap_{t\in {\sf T}}\{q\in Q\mid [q,\mu(t)]\leq \Gamma_4\}$. As
    $[\Gamma_2,\Gamma_2]\leq \Gamma_{4}$, and because $\mu(t)\in \Gamma_2$, it
    follows that $\Gamma_2\leq X$. Suppose that $q\in Y$, if $q \not\in
    \Gamma_2$ then $\alpha(q\Gamma_2) \neq 0$ and there exists $t \in {\sf T}$
    such that $\alpha(q\Gamma_2)  t \neq 0$, which is equivalent to
    $\gamma([q, \mu(t)]\Gamma_4 ) \neq 0$, but this contradicts the
    assumption that $q\in X$. This shows that $X=\Gamma_2$.

    Next, put $Y=[\rho(1),X]$ and $Z = [X,X]$. It follows from the definition of
    exact module nursery that $Y = \Gamma_3$ and $Z = \Gamma_4$.

    It remains to reconstruct $\Gamma_1$, but $\Gamma_1$ is a super group so
    here we mean simply that data $(Q,\rho,\mu)$ identifies how $Q/\Gamma_2$
    sits in $L_1$. Therefore the preimage of that embedding is fixed by the data
    provided. For that observe the usual additive mapping
    $Q/\Gamma_2\hookrightarrow \hom(L_2,L_3)$ is an embedding because the kernel
    $X=\Gamma_2$.  So we obtain an embedding $\chi:Q/\Gamma_2\to \End(M)$,
    relative to $(\beta,\gamma)$. Lastly, for $s\in {\sf S}$ and $m\in M$,
    $\chi(\rho(s))(\beta^{-1}(m))=\gamma[\rho(s),\beta^{-1}(m)]=sm$. Since
    $R=k\langle {\sf S}\rangle$, the $k$-algebra generated by the image of
    $\chi$ contains the image of $R$ in $\End(M)$, and because the image of
    $\chi$ consists of the $\gamma$-image of commutators, $\chi(Q)$ is contained
    in the image of $R$ in $\End(M)$. Thus, $\chi$ leads to a unique embedding
    of $Q/\Gamma_2$ into $R\cong \Gamma_1/\Gamma_2$. Therefore $(Q,\rho,\mu)$
    determines the embedding of $Q$ into $\Gamma_1$.

    In order to estimate the number of kinder, count the number
    of subspaces of $R$ which contain ${\sf S}$, which is at least
    $p^{(\ell-s) (r - \ell)}$ which is lower bound for the total number of
    $(Q,\rho,\mu)$ triples.

    The choice for $\rho:{\sf S} \to V_1\cap Q$ is (at most) $p^{\ell s}$ and
    the choices for $\mu:T\to V_2$ are $p^{mt}$, therefore at most $p^{\ell s +
    mt}$ triples correspond to isomorphic groups $Q$. Thus the number of
    isomorphism types of kinder is at least $p^{(\ell-s)(r-\ell)-\ell s-mt}$.
\end{proof}

\begin{coro}\label{coro:U_d}
    Fix $p$, for $d\geq 5$, the following holds:
    \begin{align*}
        p^{(1/64-o(1))d^4e^2} \leq
        \sigma_{\iota}(U_d(\mathbb{F}_{p^e}))
        \leq \sigma(U_d(\mathbb{F}_{p^e}))
        \leq p^{(1/64+o(1))d^4e^2}.
    \end{align*}
    For $d\in \{3,4\}$ the following holds:
    \begin{align*}
        p^{(1/4-o(1))e^2} \leq \sigma_{\iota}(U_d(\mathbb{F}_{p^e})).
    \end{align*}
    Meanwhile $\sigma(U_3(\mathbb{F}_p^e))\leq p^{3e^2/4+O(e)}$ and
    $\sigma(U_4(\mathbb{F}_{p^e}))\leq p^{3e^2/2+O(e)}$.
    For $d=2$, $\sigma_{\iota}(U_d(\mathbb{F}_{p^e}))=e+1$ and
    $\sigma(U_d(\mathbb{F}_{p^e}))\in p^{e^2/4+O(e)}$.
\end{coro}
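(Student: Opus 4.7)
My plan is to derive the lower bounds from Theorem~\ref{thm:nursery-count} applied to embedded generalised Heisenberg subgroups, and the upper bounds from Wall's subgroup inequality together with the parabolic structure of $U_d$.

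For the lower bound when $d\geq 5$, I would embed $\mathcal{H}_{a,a,c}(\mathbb{F}_{p^e})$ in $U_d(\mathbb{F}_{p^e})$ as the obvious block inclusion, with $a=\lfloor(d-1)/2\rfloor$ and $c=d-2a\in\{1,2\}$ to maximise $a$. By the discussion following~\eqref{def:hei}, $\mathcal{H}_{a,a,c}$ is an exact module nursery with $R=\mathbb{M}_a(\mathbb{F}_{p^e})$ and $M=\mathbb{M}_{a\times c}(\mathbb{F}_{p^e})$. Take ${\sf S}$ to be a bounded-size $\mathbb{F}_p$-algebra generating set of $R$ (say a primitive element of $\mathbb{F}_{p^e}/\mathbb{F}_p$ together with a handful of elementary matrices) and ${\sf T}$ the $c$ standard unit columns of $M$, so $|{\sf S}|,|{\sf T}|=O(1)$, $r=a^2e$, $m=ace$. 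With $\ell=\lfloor a^2e/2\rfloor$, Theorem~\ref{thm:nursery-count} yields $p^{a^4e^2/4-O(a^2e)}$ isomorphism types of kinder, and since $a\sim d/2$ this is $p^{(1/64-o(1))d^4e^2}$. For $d=3$, $U_3=\mathcal{H}_{1,1,1}$ itself is the nursery with $R=M=\mathbb{F}_{p^e}$; take ${\sf S}=\{1,u\}$ for a primitive $u$, ${\sf T}=\{1\}$, and $\ell=\lfloor e/2\rfloor$ to obtain $p^{e^2/4-O(e)}$. For $d=4$ the monotonicity $\sigma_{\iota}(U_4)\geq\sigma_{\iota}(U_3)$ transfers that bound.

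For the upper bounds, Wall's theorem applied to $|U_d(\mathbb{F}_{p^e})|=p^{\binom{d}{2}e}$ produces leading exponent constant $d^2(d-1)^2/16$, four times too large when $d\geq 5$. I would tighten this by passing to the abelian normal subgroup $N\normaleq U_d$ equal to the unipotent radical of the maximal parabolic of type $(\lfloor d/2\rfloor,\lceil d/2\rceil)$---namely the matrices identity outside the $\lfloor d/2\rfloor\times\lceil d/2\rceil$ upper-right block, which is abelian of order $q^{\lfloor d/2\rfloor\lceil d/2\rceil}$ and satisfies $U_d/N\cong U_{\lfloor d/2\rfloor}\times U_{\lceil d/2\rceil}$. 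Wall on the abelian $N$ already supplies $\sigma(N)\leq p^{(1/64+o(1))d^4e^2}$, matching the lower bound. Each $H\leq U_d$ is pinned down by the pair $(H\cap N,\,HN/N)$ together with lifting data controlled by standard cocycle counts, and strong induction on $d$ keeps $\sigma(U_d/N)$ negligible against $\sigma(N)$. The small cases are treated analogously with the natural abelian normal subgroups: $U_2$ is itself elementary abelian; for $U_3$ take $N=Z(U_3)$ of order $q$, where the isotropy condition on $HN/N$ imposed by the Heisenberg commutator cuts Wall's $9e^2/4$ to $3e^2/4$; for $U_4$ use the order-$q^4$ abelian normal subgroup consisting of the matrices identity outside the $2\times 2$ upper-right block.

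The main obstacle is the tight $1/64$ constant in the upper bound when $d\geq 5$: Wall's universal inequality is short by a factor of four, so the argument must exploit the specific Levi decomposition of the type-$A$ Borel to isolate an abelian normal subgroup of the correct size and account for extensions precisely enough that no polynomial-in-$d$ losses corrupt the leading term. A secondary difficulty is that the lower bound only achieves the constant $(d-1)^4/(64d^4)$ for each fixed $d$---approaching $1/64$ in the limit $d\to\infty$---so the $o(1)$ in the statement must genuinely absorb this boundary effect.
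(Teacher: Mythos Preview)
Your lower-bound argument is essentially the paper's: embed $\mathcal{H}_{a,a,c}(K)$ in $U_d(K)$ and invoke Theorem~\ref{thm:nursery-count} with $\ell\approx a^2e/2$. The paper fixes $c=1$, takes ${\sf S}=\{I_a,\,\omega E_{11},\,\sum_i E_{i,(i\bmod a)+1}\}$, and for ${\sf T}$ uses a $K$-basis of $M=K^a$, so $t=a$. One correction: your claim that ${\sf T}$ can be the ``$c$ standard unit columns'' with $|{\sf T}|=O(1)$ is wrong. For $R=\mathbb{M}_a(K)$ acting on the left of $\mathbb{M}_{a\times c}(K)$ one has $\bigcap_{x\in{\sf T}}\mathrm{Ann}_R(x)=0$ only when the columns of the matrices in ${\sf T}$ together span $K^a$, which forces $|{\sf T}|\geq\lceil a/c\rceil$. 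This does not harm the asymptotic (the correction $mt$ remains $O(d^2e)$), but as stated your ${\sf T}$ fails the hypothesis of Theorem~\ref{thm:nursery-count}.

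For the upper bound the paper does far less than you propose: it simply applies Wall's inequality $\sigma(p^{\nu})\leq p^{\nu^2/4+O(\nu)}$ to $\nu=\log_p|U_d(\mathbb{F}_{p^e})|=\binom{d}{2}e$ and stops---no parabolic decomposition, no induction, no cocycle accounting. Your instinct that this yields leading exponent $\binom{d}{2}^2e^2/4\sim d^4e^2/16$ rather than $d^4e^2/64$ is correct; the paper's line ``$\nu=\binom{d-1}{2}e=d^2e/4+O(de)$'' contains an arithmetic slip in both the binomial and the asserted leading coefficient. So your parabolic-radical refinement is aimed at a genuinely sharper bound than the paper's proof establishes. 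Even so, your sketch does not visibly close the gap: the lifting data attached to a pair $(A,B)=(H\cap N,HN/N)$ is bounded by $|N/A|^{d(B)}$, and $d(B)$ for subgroups of $U_{\lfloor d/2\rfloor}\times U_{\lceil d/2\rceil}$ can be of order $d^2e$, so this factor alone is $p^{\Theta(d^4e^2)}$ with no mechanism in your outline to push its constant below Wall's. The stated $d=3,4$ upper bounds are likewise sharper than a direct application of Wall, and the paper's proof does not separately address them.
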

\begin{proof}
    The group $U_d(K)$ contains $\mathcal{H}_{abc}(K)$ where $a=b$ and $c=1$
    which has an exact module nursery with $R = \mathbb{M}_a(K)$ and $M = K^a$.
    Fix a generator $\omega$ for $K/k$ so that $R=k\langle S\rangle$ where ${\sf
    S}=\{I_a, \omega_1 E_{1,1}, \sum_i E_{i,(i \mod a)+1}\}$.  For ${\sf T}$
    choose a basis of $M$ over $K$. Subject to the constraint $d=2a+1$, by
    Theorem~\ref{thm:nursery-count},
    \begin{align*}
        \sigma_{\iota}(U_d(K))\geq \sigma_{\iota}(\mathcal{H}_{aa1}(K))
        & \geq \max_{\ell} p^{(\ell-3)(a^2 e - \ell) - 3\ell - a^2 e}.
    \end{align*}
    The maximum is achieved when $\ell = \lfloor a^2 e/2 \rfloor$.  Since
    $a=\lfloor (d-1)/2\rfloor$ this yields a lower bound on
    $\sigma_{\iota}(U_d(\mathbb{F}_{p^e}))$
    of $p^{1/64 (d-1)^4 e^2 - (d-1)^2 e}$.  As $|U_d(\mathbb{F}_{p^e})|=p^{\nu}$
    where $\nu=\binom{d-1}{2}e=d^2e/4 +O(de)$. By Wall's theorem \cite{Wall}
    $\sigma(U_d(\mathbb{F}_{p^e}))\leq \sigma(\mathbb{F}_p^{\nu})\leq p^{\nu^2/4+O(\nu)}$.
    So $\sigma(U_d(\mathbb{F}_{p^e}))\leq p^{(1/64+o(1))d^4 e^2}$.
\end{proof}

\begin{remark}\label{rem:small-rank-1}
    The bounds hidden in $o(1)$ can be resolved into the following (some which will be
    improved by our next estimate). With $p$ fixed,
    $\sigma_{\iota}(U_d(\mathbb{F}_{p^e}))\geq p$ in the following cases:
    $d \in \{3,4\}$ and $13 \leq  e$; $d\in \{5, 6\}$ and $5 \leq e$;
    $d \in \{7,8,9,10\}$ and $2\leq e$; and, $d \geq 11$ and $1  \leq e$.
\end{remark}

\section{General nurseries}

Having described the general bound we look here to improve the lower bounds by
proving a stronger property about most subgroups of generalized Heisenberg
groups $\mathcal{H}:=\mathcal{H}_{abc}(K)$ and the filters $\Gamma_*$ we
introduced in Section~\ref{sec:nurseries}.  Note that in this section
$(a,b,c)$ can be arbitrary and in general $\Gamma$ is an exact nursery but not
typically a module nursery. Indeed the associated Lie algebra $L=\bigoplus_i
\Gamma_i/\Gamma_{i+1}$ recovers general matrix multiplication as the bracket
$[,]_{1,2}:L_1\otimes L_2\to L_3$ instead:
\begin{align*}
  \mathbb{M}_{a\times b}(K) \otimes \mathbb{M}_{b\times c}(K)
    \to \mathbb{M}_{a\times c}(K).
\end{align*}

First a few remarks on the canonicity of the choice of $\Gamma_i$: the subgroup
$\Gamma_3$ is the commutator subgroup of $\mathcal{H}$, and therefore is 
characteristic; identifying $E:=\Gamma_2$ is more subtle.  There is a competing choice of
subgroup $\Gamma_2 < F < \Gamma_1$ where (in the notation of \eqref{def:hei})
$V=0$ and $U\in \mathbb{M}_{a\times b}(K)$.  In fact these coordinates are not
in general group theoretic features so there could be many further choices.

The property we need is on pairs of subgroups.  Witness that
$E$ and $F$ are abelian subgroups such that $\mathcal{H}=\langle E,F\rangle$,
$E\cap F =[\mathcal{H},\mathcal{H}]\leq Z(\mathcal{H})$. Such so-called
\emph{hyperbolic pairs} were first studied by Brahana \cite{Brahana}.  
In~\cite{BMW:genus2}*{Lemma~3.5} a characterization of hyperbolic pairs showed they
are in bijection with specific idempotents of a ring $\mathcal{M}$ 
(see~\eqref{def:adj} below) that combined with~\cite{Wilson:Skolem-Noether}*{Corollary~1.5} 
implies that hyperbolic pairs in
$\mathcal{H}_{abc}(K)$ are in the same orbit under the action of the
automorphism group. Thus, the assumption that $a\geq c$ stipulates that we take
the smaller of the two terms in any hyperbolic pair, or if a $a=c$ to pick any
of the terms.
In fact, when $a,c>1$ there is exactly one hyperbolic pair for
$\mathcal{H}_{abc}(K)$, and if furthermore $a>c$ then $\Gamma_2$ is the unique
smallest subgroup in this hyperbolic pair.  When $a=c=1$ the automorphism
group is unusually large and thus there are many hyperbolic pairs.

The main result in this section shows that for a generic kinder $Q$ of
$\Gamma_*$ the same argument applies and the subgroups $\Gamma_3$ are
characteristic in $Q$ and $\Gamma_2$'s are in a single $\Aut(Q)$-orbit
(Proposition~\ref{prop:generic-Q}). In addition all isomorphisms between such
subgroups come from automorphisms of $\mathcal{H}$ fixing $\Gamma_2$. It is
interesting to note that this result also holds in when $c=1$ and $e>7$ even
though in this case $\Gamma_2$ is far from being a characteristic subgroup of
$\mathcal{H}$.

We prove that under mild conditions for $a,b,c,e$, isomorphisms between generic
kinder extend to automorphisms of $\Gamma_1$.  Our version of generic is
measured as a probability but can also be generic in the sense of algebraic
geometry.

\begin{thm}\label{thm:generic} Fix the nursery $\Gamma_*$ of
    $\mathcal{H}_{abc}(\mathbb{F}_q)$ of Section~\ref{sec:nurseries}. If $a\leq
    b$ and $\ell>2+b/a$, or $b<a$ and $\ell>2+a/b$, then amongst kinder $Q$ of
    $\Gamma_*$ with $[Q:\Gamma_2]=p^{\ell}$,
    \begin{align*}
        \Prob(\exists \alpha\in \Aut(\Gamma_1), \alpha(Q)=\tilde{Q}\mid Q\cong \tilde{Q})
        \geq 1-O(1/p).
    \end{align*}
\end{thm}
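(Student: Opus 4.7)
The plan is to show that for a generic kind $Q$, the subgroups $\Gamma_3$ and (the unordered pair containing) $\Gamma_2$ are recoverable from the abstract group $Q$, so that any isomorphism $Q\to\tilde Q$ descends to an isomorphism of the associated bracket $\mathbb{M}_{a\times b}(K)\otimes \mathbb{M}_{b\times c}(K)\to \mathbb{M}_{a\times c}(K)$, which then lifts to $\Aut(\mathcal{H}_{abc})$ via \cite{Wilson:Skolem-Noether}*{Corollary~1.5}.

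First, I would verify that generically $\Gamma_3 = Z(Q) = [Q,Q]$ and hence is characteristic. Direct commutator computation in $\mathcal{H}_{abc}$ yields
\begin{align*}
    [Q,Q] = W_Q \cdot \mathbb{M}_{b\times c}(K), \qquad Z(Q)/\Gamma_3 = \{V \in \mathbb{M}_{b\times c}(K) : W_Q \cdot V = 0\},
\end{align*}
where $W_Q := Q/\Gamma_2 \hookrightarrow \mathbb{M}_{a\times b}(K)$. Both equalities with $\Gamma_3$ hold precisely when $W_Q$ has full column span in $K^a$ and trivial right null space in $K^b$. A dual count over the (codimension $\geq 1$) loci of $W_Q$ on which some nonzero covector or vector annihilates $W_Q$ bounds the failure density by $O(1/p)$ as soon as $\ell b$ exceeds $a$ and $\ell a$ exceeds $b$ --- the content of the hypothesis $\ell > 2 + b/a$ (resp.\ $\ell > 2 + a/b$), with the ``$+2$'' providing the slack also needed in the next step.

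With $\Gamma_3$ recovered intrinsically, the commutator form $c_Q\colon Q/\Gamma_3\times Q/\Gamma_3\to \Gamma_3$ is an invariant of $Q$. Both $\Gamma_2/\Gamma_3$ and the ``$V=0$ slice'' $F_0/\Gamma_3$ are abelian, meet in the trivial coset, and together generate $Q/\Gamma_3$, exhibiting $\{\Gamma_2,F_0\}$ as a hyperbolic pair in the sense of Brahana. By \cite{BMW:genus2}*{Lemma~3.5} such pairs in $Q$ correspond bijectively to complementary nontrivial idempotents in the adjoint ring $\mathcal{M}_Q$ of $c_Q$. The heart of the proof is the generic claim that $\mathcal{M}_Q$ admits exactly one such idempotent pair, so $\{\Gamma_2,F_0\}$ is the unique hyperbolic pair up to swap, and hence a characteristic unordered pair of subgroups of $Q$.

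Granting this, any isomorphism $\phi\colon Q\to\tilde Q$ must send $\Gamma_3^Q$ to $\Gamma_3^{\tilde Q}$ and $\{\Gamma_2,F_0\}_Q$ to $\{\Gamma_2,F_0\}_{\tilde Q}$. Either $|\Gamma_2|\neq |F_0|$ (the generic numerical case $\ell\neq bce$) forces $\phi(\Gamma_2^Q) = \Gamma_2^{\tilde Q}$ directly, or I may compose with an automorphism of $\mathcal{H}_{abc}$ realizing the Brahana swap (e.g.\ a transpose-inversion when $a=c$). The resulting compatible triple on $(Q/\Gamma_2,\ \Gamma_2/\Gamma_3,\ \Gamma_3)$ intertwines matrix multiplication, and \cite{Wilson:Skolem-Noether}*{Corollary~1.5} lifts it to an automorphism of $\mathcal{H}_{abc}$ with the desired property. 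The main obstacle is the generic control of $\mathcal{M}_Q$: the locus of $W_Q$ where extra idempotents appear must be shown to be cut out by nontrivial polynomial conditions, a codimension count the numerical hypotheses on $\ell$ are calibrated to enable.
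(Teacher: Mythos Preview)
Your outline tracks the paper's argument through its first two stages: that $[Q,Q]=\Gamma_3$ generically (Proposition~\ref{prop:generic-Q-1}), and that the hyperbolic pair containing $\Gamma_2$ is recovered from the adjoint ring $\mathcal{M}_Q=\End(\Lambda_*(Q))$ (Proposition~\ref{prop:generic-Q}). One small correction there: the generic $\mathcal{M}_Q$ need not have a \emph{unique} idempotent pair up to swap. Corollary~\ref{coro:prob-end} gives $\mathcal{M}_Q\cong(K\oplus K)\ltimes J$ or $\mathcal{M}_Q\cong\mathbb{M}_2(K)$, and in either case the proper primitive idempotents form a single conjugacy class, so the hyperbolic pairs lie in one $\Aut(Q)$-orbit rather than being literally unique.

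The substantive gap is your final lifting step. After arranging $\phi(\Gamma_2)=\Gamma_2$ you obtain $k$-linear maps $\phi_2$ on $\Gamma_2/\Gamma_3\cong\mathbb{M}_{b\times c}(K)$, $\phi_3$ on $\Gamma_3\cong\mathbb{M}_{a\times c}(K)$, and $\phi_1\colon W_Q\to W_{\tilde Q}$ with $\phi_3(UV)=\phi_1(U)\phi_2(V)$ \emph{only for} $U\in W_Q$. Citing \cite{Wilson:Skolem-Noether}*{Corollary~1.5} does not close this: that result describes $\Aut(\mathcal{H}_{abc})$, i.e.\ isotopisms of the \emph{full} product $\mathbb{M}_{a\times b}\otimes\mathbb{M}_{b\times c}\to\mathbb{M}_{a\times c}$, and there is no a~priori reason an arbitrary $k$-linear pair $(\phi_2,\phi_3)$ compatible with the restricted product extends to one. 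The paper supplies the missing ingredient via a second ring invariant, the right nucleus
\[
\mathcal{R}_Q=\{(g,h)\in\End(\Gamma_2/\Gamma_3)\times\End(\Gamma_3):\forall\,q\in Q/\Gamma_2,\ \forall\,v,\ [q,gv]=h[q,v]\},
\]
and proves (Proposition~\ref{prop:generic-R}) that generically $\mathcal{R}_Q\cong\mathbb{M}_c(K)$. This forces the intrinsic embedding $\iota_Q\colon Q/\Gamma_2\hookrightarrow\Hom_{\mathcal{R}_Q}(\Gamma_2/\Gamma_3,\Gamma_3)\cong\mathbb{M}_{a\times b}(K)$, so that $\phi$ induces a $k$-linear automorphism of all of $\mathbb{M}_{a\times b}(K)$, which then does extend to $\Aut(\mathcal{H}_{abc})$. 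Your proposal controls $\mathcal{M}_Q$ but never introduces $\mathcal{R}_Q$; without it the passage from ``compatible triple on $(W_Q,\mathbb{M}_{b\times c},\mathbb{M}_{a\times c})$'' to ``automorphism of $\mathcal{H}_{abc}$'' is unjustified.
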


That result leads to our counting claims, first one about $\mathcal{H}_{abc}(K)$.

\begin{coro}\label{coro:U-count}
    For some constants $C,D>0$
    $$
    p^{(abe)^2/4-C abe}\leq \sigma_{\iota}(\mathcal{H}_{abc}(\mathbb{F}_{p^e}))
    \leq p^{(ab+bc+ac)^2e^2/4+D(ab+bc+ac)e}.
    $$
    Thus, for $a,b\in d/2+O(1)$, and $c\in O(1)$,
    we find $\sigma_{\iota}(\mathcal{H}_{abc}(\mathbb{F}_{p^e}))\in
    p^{d^4e^2/64+\Theta(d^2e^2)}$.
\end{coro}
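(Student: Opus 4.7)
The plan is to combine Theorem~\ref{thm:generic} with a Gaussian binomial count for the lower bound, and Wall's theorem for the upper bound, mirroring the proof of Corollary~\ref{coro:U_d}. First I work with the exact nursery $\Gamma_1 > \Gamma_2 > \Gamma_3$ of $\mathcal{H}_{abc}(\mathbb{F}_{p^e})$ from Section~\ref{sec:nurseries}, for which $L_1 = \Gamma_1/\Gamma_2 \cong \mathbb{M}_{a\times b}(K)$ has $\mathbb{F}_p$-dimension $abe$. Kinder $Q$ with $[Q:\Gamma_2] = p^{\ell}$ correspond bijectively to $\ell$-dimensional $\mathbb{F}_p$-subspaces of $L_1$. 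Choosing $\ell = \lfloor abe/2 \rfloor$, which for $abe$ large lies comfortably above the threshold $2 + \max\{a/b, b/a\}$ demanded by Theorem~\ref{thm:generic}, the Gaussian binomial supplies at least $p^{\ell(abe - \ell)} \geq p^{(abe)^2/4 - O(abe)}$ such kinder.

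Theorem~\ref{thm:generic} then says that a fraction $1 - O(1/p)$ of these kinder are generic: any abstract isomorphism to another kinder of the same index is realised by some $\alpha \in \Aut(\Gamma_1)$. The number of isomorphism classes of kinder is therefore at least the count of generic kinder divided by $|\Aut(\Gamma_1)|$. To bound $|\Aut(\Gamma_1)|$ I appeal to the hyperbolic pair discussion preceding Theorem~\ref{thm:generic}: $\Gamma_3 = [\mathcal{H},\mathcal{H}]$ is characteristic and $\Gamma_2$ is essentially canonical, so $\Aut(\Gamma_1)$ reduces to an extension of the standard $\GL_a(K) \times \GL_b(K) \times \GL_c(K)$-action on matrix multiplication (together with field automorphisms) by the inner automorphisms, of combined order $p^{O((a+b+c)^2 e)}$. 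Dividing produces the lower bound $p^{(abe)^2/4 - C abe}$, the constant $C$ absorbing the ratio-dependent overhead between $abe$ and $(a+b+c)^2 e$.

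The upper bound copies the proof of Corollary~\ref{coro:U_d}: since $|\mathcal{H}_{abc}(\mathbb{F}_{p^e})| = p^{(ab+bc+ac)e}$, Wall's theorem yields $\sigma(\mathcal{H}_{abc}(\mathbb{F}_{p^e})) \leq p^{((ab+bc+ac)e)^2/4 + D(ab+bc+ac)e}$, and $\sigma_\iota \leq \sigma$. For the specialisation $a, b \in d/2 + O(1)$ with $c \in O(1)$, the leading terms $(abe)^2/4$ and $((ab+bc+ac)e)^2/4$ both equal $d^4 e^2/64 + O(d^3 e^2)$, and once the linear error terms are collected one obtains $\sigma_\iota(\mathcal{H}_{abc}(\mathbb{F}_{p^e})) \in p^{d^4 e^2/64 + \Theta(d^2 e^2)}$.

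The main obstacle has already been cleared by Theorem~\ref{thm:generic}: it is precisely what converts a count of subspaces into a count of isomorphism classes. What remains is bookkeeping --- verifying that $\ell \approx abe/2$ satisfies the admissibility threshold of Theorem~\ref{thm:generic}, and that the bound on $|\Aut(\Gamma_1)|$ coming from hyperbolic pair uniqueness is polynomial enough in $p^{abe}$ to be absorbed into the linear error term in the exponent.
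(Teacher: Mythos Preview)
Your proposal is correct and follows essentially the same route as the paper: count kinder as $\mathbb{F}_p$-subspaces of $L_1\cong\mathbb{M}_{a\times b}(K)$, invoke Theorem~\ref{thm:generic} so that isomorphic generic kinder lie in a single $\Aut(\mathcal{H}_{abc}(K))$-orbit, divide the subspace count by the size of that group, and use Wall's theorem for the upper bound. The only cosmetic difference is that the paper pins down the action on $L_1$ by citing the explicit description of $\Aut(\mathcal{H}_{abc}(K))$ from \cite{Wilson:Skolem-Noether}*{Corollary~1.5} (factoring through $\Gal(K)\ltimes(\GL_a\times\GL_b\times\GL_c)/K^\times$ and its variants), whereas you estimate it informally via the hyperbolic-pair discussion; both yield the same $p^{O((a^2+b^2+c^2)e)}$ orbit bound.
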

\begin{proof}
    Without loss of generality let $a\geq c$.
    For the lower bound observe that $\Gamma_1/\Gamma_2\cong \mathbb{M}_{a\times
    b}(K)$ as additive groups and the subgroups $Q$ where $\Gamma_2<Q<\Gamma_1$
    are enumerated by $\mathbb{F}_p$-subspaces $V$ of $\mathbb{M}_{a\times
    b}(K)$, where $|K|=p^e$ with $|C|=p^{\ell}$.  That yields
    $p^{\ell(abe-\ell)}$ choices of $Q$. Assuming $Q$ is generic, the set of
    generic subgroups isomorphic to $Q$ are in the same orbit of the subgroup
    $A:=\Aut(\mathcal{H}_{abc}(K))$ which fix $\Gamma_2$. From the structure of
    $A$ given in \cite{Wilson:Skolem-Noether}*{Corollary~1.5}, it follows that
    this action factors through\footnote{In the special case $a>c=1$ the full
    automorphism group does not preserve $\Gamma_2$ but the stabilizer of
    $\Gamma_2$ is nevertheless the group described.  We note that that work fails
    to report the obvious graph automorphism in the case $a=c$.  This does not
    affect our count but we include the correction for completeness.}
    \begin{align*}
        \Gal(K)\ltimes (\GL_a(K)\times  \GL_b(K)\times \GL_c(K))/K^{\times},
            &\textnormal{ if } a>c\geq 1;\\
        2.\Gal(K))\ltimes (\GL_a(K)\times  \GL_b(K)\times \GL_a(K))/K^{\times},
            & \textnormal{ if }a=c>1;\\
        \Gal(K)\ltimes {\rm GSp}_{2b}(K), &\textnormal{ if } a=c=1.
    \end{align*}
    Hence, each orbit has cardinality at most
    $p^{O((a^2+b^2+c^2)e)}$.
    Maximizing over $\ell= (1+o(1))\frac{abe}{2}$,
    there are at least $p^{(abe)^2/4+O(abe)}$ distinct orbits.

    The upper bound comes from Wall's theorem \cite{Wall}.
\end{proof}

\begin{remark}\label{rem:best-bounds-U}
    Improving the bounds from Remark~\ref{rem:small-rank-1} from
    Corollary~\ref{coro:U-count} one obtains that $p\leq
    \sigma_{\iota}(U_d(p^e))\in p^{(1/64-o(1))d^4e^2}$ whenever $d=3$ and $e\geq
    9$, $d=4$ and $e\geq 5$, $d=5$ and $e\geq 3$, $d=6$ and $e\geq 2$, and in
    general when $d\geq 7$. Furthermore the secondary error term of
    Corollary~\ref{coro:U_d} is improved from $-Cd^3e^2$ to $-Dd^2 e$.
\end{remark}

\subsection{Some probabilistic estimates}
\label{sec:prob}

\begin{lemma}
    \label{lm:prob-span}
    Let $v_1,\ldots ,v_s$ be independently random vectors of an
    $n$-dimensional vector space $V$ over a finite field of order $q$.
    $$
    \Prob(V=\langle v_1,\ldots,v_s\rangle) \geq 1 - \frac{q^{n-s}-q^{-s}}{q -1}
        \geq 1 - q^{n-s}.
    $$
\end{lemma}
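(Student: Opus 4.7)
The plan is to use a union bound over hyperplanes of $V$. The failure event $V\neq \langle v_1,\ldots,v_s\rangle$ is equivalent to the span being contained in some proper subspace, and every proper subspace is contained in a hyperplane; so the failure event equals the union, over hyperplanes $H\leq V$, of the events $\{v_1,\ldots,v_s\in H\}$.

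The first step is bookkeeping: count the hyperplanes. By duality with lines in $V^*$, there are exactly $(q^n-1)/(q-1)$ hyperplanes. The second step is a single-vector computation: for a fixed hyperplane $H$, $|H|/|V|=1/q$, so by independence $\Prob(v_1,\ldots,v_s\in H)=q^{-s}$.

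The third step assembles these by the union bound:
\begin{align*}
\Prob\bigl(V\neq \langle v_1,\ldots,v_s\rangle\bigr)
   \;\leq\; \sum_{H}\Prob(v_1,\ldots,v_s\in H)
   \;=\; \frac{q^n-1}{q-1}\cdot q^{-s}
   \;=\; \frac{q^{n-s}-q^{-s}}{q-1},
\end{align*}
which is exactly the first claimed bound after subtracting from $1$. The second inequality $\frac{q^{n-s}-q^{-s}}{q-1}\leq q^{n-s}$ is a routine algebraic check: clearing the denominator reduces it to $2-q^{-n}\leq q$, which is trivial for $q\geq 2$.

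There is no genuine obstacle here; the only thing to get right is the combinatorial accounting (hyperplanes rather than all proper subspaces, and the clean identity $|H|/|V|=1/q$), which makes the union bound match the stated expression on the nose. Inclusion–exclusion would yield a sharper formula, but the union bound is tight enough for the applications and matches the inequality as written.
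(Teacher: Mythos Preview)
Your proof is correct and follows essentially the same approach as the paper: a union bound over the $(q^n-1)/(q-1)$ hyperplanes, each containing all $s$ vectors with probability $q^{-s}$. You add the explicit verification of the second inequality, which the paper omits, but the core argument is identical.
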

\begin{proof}
    There are $(q^n-1) / (q-1)$ maximal subspaces, and the probability that a
    random vector is in a given maximal subspace is $q^{-1}$. So the
    probability that all $\{v_i\}$ are in a fixed maximal subspace is $q^{-s}$.
    Multiplying this by the number of maximal subspaces gives an upper bound of
    the probability that the span of $\{v_i\}$ is a proper subspace.
\end{proof}
\begin{remark}
    When the field $K$ is infinite say instead that the variety of $s$
    tuples of vectors in $K^n$ which do not span the whole space has codimension
    $n-s+1$.
\end{remark}

For $(s\times b)$-matrices $\Phi_1,\ldots,\Phi_c$ and $(a\times t)$-matrices
$\Upsilon_1,\ldots,\Upsilon_c$ over $K$, we have a  $K$-vector space
\begin{align}\label{def:adj}
    \hom(\Phi_*,\Upsilon_*) & = \{
        (A,B)\in \mathbb{M}_{a\times s}(K)\times\mathbb{M}_{b\times t}(K)\mid
                    (\forall i)(A\Phi_i=\Upsilon_i B^t)\}.
\end{align}
As the notation suggests these are morphisms in an abelian category (though not
in general a module category \cite{Wilson:division}) and so
$\End(\Phi_*)=\hom(\Phi_*,\Phi_*)$ is a ring.  This is in fact the ring $\mathcal{M}$
alluded to in the introduction of this section.  We now quantify the generic
expectation of these morphisms sets.

\begin{thm}
    \label{thm:prob-end}
    Let $m\le n$ and $\Phi_1,\ldots,\Phi_s$ be independently random
    $(m\times n)$-matrices over a finite field $K$ of order $q$.
    If $2+n/m \leq s$ then
    \begin{align*}
    & & \Prob(\End(\Phi_*)\cong K) &  \geq 1-O(1/q) \\
    & (m<n\textnormal{ or }2<m=n) & \Prob(\hom(\Phi_*,\pm \Phi^t_*)= 0 ) &  \geq 1-O(1/q).\\
    \end{align*}
\end{thm}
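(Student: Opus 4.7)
The plan is to identify $\End(\Phi_*)$ as the kernel of the Sylvester-type linear map
\[
    \mathcal{S}\colon \mathbb{M}_m(K)\oplus\mathbb{M}_n(K)\longrightarrow \mathbb{M}_{m\times n}(K)^s,\qquad (A,B)\mapsto (A\Phi_i-\Phi_iB^t)_{i=1}^s,
\]
and to observe that $\Delta:=K\cdot(I_m,I_n)$ always lies in $\ker\mathcal{S}$; the goal is $\ker\mathcal{S}=\Delta$ with probability $1-O(1/q)$. First I would condition on $\Phi_1$ having full row rank, which by Lemma~\ref{lm:prob-span} (applied to its $n$ columns in $K^m$) holds with probability $\geq 1-O(1/q)$. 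Fixing a right inverse $\Phi_1^+$ with $\Phi_1\Phi_1^+=I_m$, the equation $A\Phi_1=\Phi_1B^t$ forces $A=\Phi_1B^t\Phi_1^+$, so the remaining equations become $T_i(B)=0$ for $i=2,\ldots,s$, where
\[
    T_i\colon\mathbb{M}_n(K)\to\mathbb{M}_{m\times n}(K),\qquad T_i(B):=(\Phi_1B^t\Phi_1^+)\Phi_i-\Phi_iB^t.
\]
One checks directly that $K\cdot I_n\subseteq\ker T_i$, and it remains to show $\bigcap_{i=2}^s\ker T_i=K\cdot I_n$ with high probability.

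The main step is a union bound over non-scalar classes $[B]\in\mathbb{P}(\mathbb{M}_n(K)/K\cdot I_n)$. For each such $[B]$ the event $T_i(B)=0$ constrains $\Phi_i$ to a $K$-subspace of $\mathbb{M}_{m\times n}(K)$ of codimension $c(B)=mn-d(B)$, where $d(B)$ is the dimension of the Sylvester kernel $\{X:\Phi_1B^t\Phi_1^+X=XB^t\}$; this depends only on the common Jordan structure of $\Phi_1B^t\Phi_1^+$ and $B^t$ over $\overline{K}$. Stratifying by $d(B)$ and bounding the size of each stratum via conjugation-orbit counts in $\GL_n(K)$, the total bound
\[
    \Prob\!\left(\ker\mathcal{S}\supsetneq\Delta\right)\leq \sum_{[B]\neq[I_n]} q^{-(s-1)c(B)}
\]
comes out to $O(1/q)$ because the Euler-type inequality $m^2+n^2\leq smn-1$, guaranteed by $s\geq 2+n/m$, furnishes exactly the slack required for convergence.

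The second claim is parallel: $\hom(\Phi_*,\pm\Phi_*^t)$ is the kernel of the related map $(A,B)\mapsto(A\Phi_i\mp\Phi_i^tB^t)_i$ from $\mathbb{M}_{n\times m}(K)^{\oplus 2}$ to $\mathbb{M}_{n\times n}(K)^s$, with no automatic diagonal present, so we aim for $\ker=0$ outright. The same union-bound technique applies; the excluded case $m=n\leq 2$ arises because the Cayley--Hamilton relation for $2\times 2$ matrices forces extra identities among $\Phi_i$ and $\Phi_i^t$ that produce nontrivial common solutions regardless of the randomness.

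The main obstacle will be the stratum-size estimate, namely showing that $|\{[B]:d(B)\geq k\}|$ decays fast enough in $k$ to match the factor $q^{-(s-1)(mn-k)}$ in the union bound. This reduces to counting matrices whose rational canonical form shares an invariant factor of degree at least $k/\min(m,n)$ with that of $\Phi_1B^t\Phi_1^+$, a problem handled by standard orbit-parametrization arguments, where the dominant contribution comes from small $k$ (equivalently, large $c(B)$), which is also where $(s-1)c(B)$ is largest.
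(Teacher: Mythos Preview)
Your route differs sharply from the paper's. The paper avoids any union bound: it observes that $\{\Phi_*:\dim\End(\Phi_*)>1\}$ is Zariski-closed in $\mathbb{M}_{m\times n}(K)^s$, so if this locus is proper it has codimension $\geq 1$ and hence at most $Cq^{mns-1}$ points, which already gives the $O(1/q)$ bound. The entire proof therefore reduces to exhibiting a \emph{single} tuple with $\End(\Phi_*)=K$ (and likewise one with $\hom(\Phi_*,\pm\Phi_*^t)=0$). For $m=n$ the witness is $\Phi_1=I_m$, $\Phi_2=\alpha$, $\Phi_3=\sigma$ with $K[\alpha]/K$ a degree-$m$ field extension and $\sigma$ a generator of $\Gal(K[\alpha]/K)$; for $m<n$ it is a list of shifted blocks $[\,0\ I_m\ 0\,]$, verified by an elementary invariant-subspace chase. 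The paper's reason for excluding $m=n\leq 2$ in the second claim is only that for $m=2$ one has $\sigma=\sigma^{-1}$, so the verification for this particular witness breaks down; it is not presented as a Cayley--Hamilton obstruction.

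Your stratified union bound could in principle be made to work, but the step you call the ``main obstacle'' is the entire content of the proof by that route, and you have not carried it out. The inequality $m^2+n^2\le smn-1$ is the naive parameter count (unknowns in $(A,B)$ modulo scalars versus equations); it is necessary for the bad locus to be proper but does not by itself establish that $\sum_k|\{[B]:d(B)\ge k\}|\cdot q^{-(s-1)(mn-k)}=O(1/q)$. You assert the dominant contribution comes from small $k$, but it is precisely the strata with large $d(B)$ --- matrices with large centralizer --- that carry the weakest suppression and must be bounded against their (small) multiplicity via honest conjugacy-class estimates. A further wrinkle in the second claim: the equation $A\Phi_i=\pm\Phi_i^tB^t$ mixes $\Phi_i$ and $\Phi_i^t$, so for fixed $(A,B)$ the solution space in $\Phi_i$ is \emph{not} a Sylvester kernel, and your common-Jordan-structure formula for the kernel dimension does not carry over.
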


\begin{proof}
    The condition that $\dim \hom(\Phi_*,\Gamma_*)>c$ is an algebraic condition
    in the entries of $(\Phi_*,\Gamma_*)$. If this condition defines a proper
    subvariety of codimension $g$, then the number of points is less than $C
    q^{mns - g}$, which implies the resulting bound.

    Since $K \subset \End(\Phi_*)$,  to show that this is a proper subvariety it
    suffices to construct an example of $\Phi_*$ with $\End(\Phi_*)\cong K$,
    i.e. a generic point off the variety. Without loss of generality we can
    assume that $m\leq n$ (otherwise we can take transpose).

    Let $m=n$ then we can view $\mathbb{M}_m(K)$ as generated as a $K$-algebra by
    $1, \alpha,\sigma$ where $E:=K\langle 1,\alpha\rangle$ is a field extension
    of degree $m$ (this step assumes $K$ is a finite field) and $E^{\sigma}=E$
    induces a field automorphism of $E$ that generates the Galois group of
    $E/K$.  For $\Phi_*$ we
    can take $\Phi_1 = I_m$, $\Phi_2=\alpha$ and $\Phi_3=\sigma$. Then
    $\End(\Phi_*)$ is the centralizer of the algebra generated by $\Phi_*$
    therefore we have  $\End(\Phi_*) \cong K$. If $m >2$ then $\sigma(\alpha)
    \not = \sigma^{-1}(\alpha)$, which implies that $\hom(\Phi_*,\pm\Phi_*^t) =
    0$.
        

    Now suppose $m<n$ and define
    \begin{align*}
     \Phi_0 & = \begin{bmatrix}  I_m & 0\end{bmatrix}\in \mathbb{M}_{m\times n}(K), \\
     \Phi_i & = \begin{bmatrix} 0_{m\times (1 + m(i-1))} &  I_m &  0\end{bmatrix}\in \mathbb{M}_{m\times n}(K) \quad \mbox{for } 1\leq i \leq (n-1)/m,\textnormal{ and} \\
     \Phi_{\infty} & = \begin{bmatrix} 0_{m\times (n-m)} &  I_m \end{bmatrix}\in \mathbb{M}_{m\times n}(K).
     \end{align*}
    Notice that $\cap_i \ker \Phi_i =0$ which implies that for a given matrix
    $A$ there is at most one matrix $B$ such that for all $i$,
    $A \Phi_i = \Phi_i B^t$.
    Define the subspaces $U_j, U'_j,W_j \leq K^m$ inductively by
    \begin{align*}
    U_1 &= \Phi_2(\ker \Phi_1)  &       U_{j+1} & = \Phi_2( \Phi_1^{-1}(U_j)) \\
    U'_1 &= \Phi_1(\ker \Phi_2)    &  U'_{j+1} & = \Phi_1( \Phi_2^{-1}(U'_j)) \\
    W_j &= U_j \cap U_{m+1-j}
    \end{align*}
    Then $\dim U_j = \dim U'_j =j$  and $\dim W_j=1 $ for $1\leq j\leq m$ and
    $\Phi_1 \Phi_2^{-1}$ induces an isomorphism $\gamma_j : W_j \to
    W_{j+1}$. The condition $(\forall i)(A \Phi_i = \Phi_i B^t)$ implies
    that $A U_j \leq  U_j$ and $A U'_j \leq U'_j$. Therefore $A$ sends the
    $1$-dimensional spaces $W_j$ to themselves and it is compatible with the
    isomorphism $\gamma_i$. Since $K^m  = \bigoplus W_j$ this implies that $A$
    is a scalar matrix, and so is $B$. This shows that $\End(\Phi_*)\cong K$.

    To see that $(A,B)\in \hom(\Phi_*,\pm \Phi_*^t)$ is $0$,
    $A\Phi_i  = \pm \Phi_i^t B^t$ implies that the first rows $A_i$
    of $A$ are $0$ wherever $\Phi_i^t$ has a $0$ row, and wherever
    $\Phi_i^t\neq 0$, $A_{1+m(i-1)+k}=B_k$, and column $B^j$ of $B$ is $0$
    wherever $\Phi_i$ has 0 columns.  Running over $i,j$, $A=0$ and $B=0$.
\end{proof}

For $m=n$ the bound of $s\geq 3$ is best possible and for $m=1$, $s=n+1$
is best possible.  Similar to results appear as far back as Kronecker, one needs a
more nuanced tool than linear algebra to prove the result generically. See for
example~\cite{GG}.  The result above may be classically known though we did not
find a version to cite.  These enable the following computation related to
the commutator of generic kinder of $\mathcal{H}_{abc}(K)$.

\begin{coro}
    \label{coro:prob-end}
    Fix $a\leq b$ and $c\geq 2+b/a$, or $b\leq a$ and $c\geq 2+a/b$.
    Given random $\Phi_1,\ldots,\Phi_c\in \mathbb{M}_{a\times b}(K)$ define
    \begin{align*}
        & (\forall v) & \Lambda_v & =
        \begin{bmatrix} 0 & \Phi_v\\ -\Phi_v^t & 0 \end{bmatrix}.
    \end{align*}
    Then if $a=b$ with high probability $\End(\Lambda_*)\cong \mathbb{M}_2(K)$
    and if in $a\neq b$ then with high probability $\End(\Lambda_*)\cong
    (K\oplus K)\ltimes J$ where $J$ is the Jacobson radical.
\end{coro}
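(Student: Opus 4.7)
The plan is to decompose each pair $(A,B)\in \End(\Lambda_*)$ compatibly with the $(a\mid b)$-block structure of $\Lambda_v$. Writing $A$ and $B$ as $2\times 2$ block matrices and expanding $A\Lambda_v=\Lambda_v B^t$ produces four block equations: the diagonal pairs give $(A_{11},B_{22})\in \End(\Phi_*)$ and $(A_{22},B_{11})\in \End(\Phi_*^t)$, and the off-diagonal pairs give $(A_{21},B_{21})\in \hom(\Phi_*,-\Phi_*^t)$ and $(A_{12},B_{12})\in \hom(\Phi_*^t,-\Phi_*)$.

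First I would apply Theorem~\ref{thm:prob-end} to the diagonal equations. The hypothesis $c\geq 2+b/a$ (respectively $c\geq 2+a/b$) is exactly the theorem's requirement $s\geq 2+n/m$, so $\End(\Phi_*)\cong K$ with probability $1-O(1/q)$; and $\End(\Phi_*^t)\cong K$ as well, obtained by transposing the defining relations. This identifies the semisimple $K\oplus K$ diagonal summand of $\End(\Lambda_*)$, parametrized by scalar pairs $(\mathrm{diag}(\lambda I_a,\mu I_b),\mathrm{diag}(\mu I_a,\lambda I_b))$.

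Second I would apply the vanishing conclusion of Theorem~\ref{thm:prob-end} to the off-diagonal spaces. When $a<b$, the theorem directly gives $\hom(\Phi_*,-\Phi_*^t)=0$ (applied with $m=a<n=b$), forcing $(A_{21},B_{21})=0$. The twin space $\hom(\Phi_*^t,-\Phi_*)$ is not directly covered, since $\Phi_*^t$ has $m=b>n=a$, and a dimension count under the given bounds shows it survives with positive generic dimension; because the product of two off-diagonal solutions lies in the diagonal but is annihilated by the surviving diagonal equations, this piece forms a square-zero ideal $J$, and composition then gives $\End(\Lambda_*)\cong(K\oplus K)\ltimes J$. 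When $a=b>2$ the theorem eliminates both off-diagonal spaces simultaneously (both with $m=n>2$), and the remaining block structure together with the involution $(A,B)\mapsto(B,A)$ inherited from the antisymmetry $\Lambda_v^t=-\Lambda_v$ assembles into $\mathbb{M}_2(K)$.

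The main obstacle is the asymmetric case: since Theorem~\ref{thm:prob-end} requires $m\leq n$, the space $\hom(\Phi_*^t,-\Phi_*)$ for $b>a$ must be analyzed directly, both to establish nonvanishing and to identify it as the Jacobson radical. Verifying the semidirect $K\oplus K$-action on $J$ then reduces to computing, via the composition rule $(A_1,B_1)(A_2,B_2)=(A_1 A_2, B_2 B_1)$, how scalar diagonal elements conjugate an off-diagonal pair; this rigidity check is what pins down the final ring structure in each case.
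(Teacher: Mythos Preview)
Your block decomposition and the identification of the four resulting conditions with $\End(\Phi_*)$, $\End(\Phi_*^t)$, $\hom(\Phi_*,-\Phi_*^t)$, and $\hom(-\Phi_*^t,\Phi_*)$ is exactly the paper's argument, and your handling of $a\neq b$---one off-diagonal forced to $0$ by Theorem~\ref{thm:prob-end}, the other surviving as a square-zero ideal $J$, whence $(K\oplus K)\ltimes J$---matches the paper.

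The gap is the $a=b$ case. You apply Theorem~\ref{thm:prob-end} to conclude both off-diagonal blocks vanish (for $a=b>2$), leaving only the diagonal $K\oplus K$, and then assert that this ``together with the involution $(A,B)\mapsto(B,A)$ \ldots\ assembles into $\mathbb{M}_2(K)$.'' That step is wrong: a $2$-dimensional $K$-algebra cannot be $\mathbb{M}_2(K)$, and an anti-involution on $K\oplus K$ does not manufacture the missing matrix units $E_{12},E_{21}$. To obtain $\mathbb{M}_2(K)$ you need each off-diagonal hom-space to contribute a copy of $K$, not to vanish. The paper's proof takes precisely that line, asserting that when $a=b$ one has $\hom(\Phi_*,-\Phi_*^t)\cong\hom(-\Phi_*^t,\Phi_*)\cong K$ generically, so that $\End(\Lambda_*)$ is $4$-dimensional over $K$. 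You may notice this sits in some tension with the vanishing clause of Theorem~\ref{thm:prob-end} for $2<m=n$; but whatever the resolution of that tension, the algebra your argument actually produces is $K\oplus K$, not $\mathbb{M}_2(K)$, and the case $a=b\le 2$ is left unaddressed.
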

\begin{proof}
    Without loss of generality let $a\leq b$.
    The equations defining $\End(\Lambda_*)$ say
    \begin{align*}
        \begin{bmatrix}
            A_{11} & A_{12} \\ A_{21} & A_{22}
        \end{bmatrix}
        \begin{bmatrix}
            0 & \Phi_v \\ -\Phi_v^t & 0
        \end{bmatrix}
        =
        \begin{bmatrix}
            0 & \Phi_v \\ -\Phi_v^t & 0
        \end{bmatrix}
        \begin{bmatrix}
            B_{11} & B_{12} \\ B_{21} & B_{22}
        \end{bmatrix}^t.
    \end{align*}
    Expanding these equations we get the $4$ defining properties
    \begin{align*}
        (A_{12}, B_{12})& \in \Hom(-\Phi_*^t,\Phi_*)
        &
        (A_{11}, B_{22})& \in \End(\Phi_*) \\
        (A_{22}, B_{11})& \in \End(-\Phi_*^t,\Phi_*^t)
        &
        (A_{21}, B_{21})& \in \Hom(\Phi_*,-\Phi_*^t).
    \end{align*}
    Theorem~\ref{thm:prob-end} implies that with high probability,
    $\End(\Phi_*)\cong \hom(\Phi_*,-\Phi_*^t)\cong \hom(\Phi_*^t,\Phi_*)\cong K$
    if $a=b$, and if $a\neq b$ then
    $\End(\Phi_*)\cong K\cong\End(\Phi_*^t)$ and $\hom(\Phi_*,-\Phi_*^t)=0$.
    The result follows.
\end{proof}

\subsection{Lifting isomorphisms; Proof of Theorem~\ref{thm:generic}}\label{sec:lift}

Now we setup the mechanics to proof Theorem~\ref{thm:generic}. Our process is in
two steps.  First we show that generically $\Gamma_3$ is a characteristic
subgroup of $Q$. Second we show the $\Gamma_2$ come from a unique orbit under
the automorphism group of $\Aut(Q)$ and that the embedding from $Q/\Gamma_2$
into $\mathbb{M}_{a\times c}(K)$ is unique up to the action by ${\rm\Gamma
L}(K^a)\times {\rm \Gamma L}(K^c)$.

\begin{prop}\label{prop:generic-Q-1} Let $Q$ range over kinder of
    $\mathcal{H}_{abc}(K)$, with respect to the nursery $\Gamma_*$, and such
    that $|Q/\Gamma_2| = p^\ell$ with $\ell>b/a$. Then with high probability
    $[Q,Q] = \Gamma_3$.
\end{prop}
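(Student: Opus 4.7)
The plan is to make $[Q,Q]$ explicit via the commutator formula in the class-$2$ nilpotent group $\mathcal{H} := \mathcal{H}_{abc}(K)$, reduce the equality $[Q,Q]=\Gamma_3$ to a column-span condition in $K^a$, and finally apply Lemma~\ref{lm:prob-span}.

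First I would compute the group commutator directly. For $q_i \in \mathcal{H}$ with block parts $(U_i, V_i, W_i)$, a routine block-matrix calculation gives $[q_1,q_2] \in \Gamma_3$ with $W$-entry $U_1 V_2 - U_2 V_1$ under the identification $\Gamma_3 \cong \mathbb{M}_{a \times c}(K)$; in particular the commutator is bilinear into the central subgroup $\Gamma_3$. Let $V := Q/\Gamma_2 \leq L_1 \cong \mathbb{M}_{a \times b}(K)$. Because $\Gamma_2 \leq Q$ consists of matrices with zero $U$-part but arbitrary $V$- and $W$-parts, multiplying any element of $Q$ by one of $\Gamma_2$ freely shifts its $V$-part without touching its $U$-part. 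Thus, given $u \in V$ and $w \in \mathbb{M}_{b \times c}(K)$, I can take $q_1 \in Q$ with $U$-part $u$ and $q_2 \in \Gamma_2 \leq Q$ with $V$-part $w$, obtaining $[q_1,q_2] = u \cdot w$. The reverse inclusion follows from bilinearity together with the closure of $V \cdot \mathbb{M}_{b \times c}(K)$ under addition, so
\[
[Q,Q] \;=\; V \cdot \mathbb{M}_{b \times c}(K) \;\leq\; \mathbb{M}_{a \times c}(K) = \Gamma_3.
\]

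Next, I would reformulate this as a subspace condition in $K^a$. Viewing each $v \in V$ as a $K$-linear map $K^b \to K^a$, the product $V \cdot \mathbb{M}_{b \times c}(K)$ is a right $\mathbb{M}_c(K)$-submodule of $\mathbb{M}_{a \times c}(K)$ equal to $\{X : \im(X) \leq W\}$, where $W := \sum_{v \in V} \im(v) \leq K^a$. Hence $[Q,Q] = \Gamma_3$ if and only if $W = K^a$, i.e.\ the union of column spaces of matrices in $V$ generates $K^a$. For the probabilistic step, fix an $\mathbb{F}_p$-basis $v_1, \ldots, v_\ell$ of the uniform random $\ell$-dimensional subspace $V \leq \mathbb{M}_{a \times b}(K)$. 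The $b$ columns of each $v_i$ supply $b\ell$ vectors in $K^a$ which, up to an $O(1/p)$ correction absorbing the $\mathbb{F}_p$-independence constraint, are independent and uniform. Lemma~\ref{lm:prob-span} applied with $n = a$ and $s = b\ell$ then bounds the failure probability by $q^{a-b\ell}$, which is $o(1)$ under the stated lower bound on $\ell$.

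The main obstacle is the first step: one must carefully exploit the kinder condition $\Gamma_2 \leq Q$ to decouple the $V$-parts from the $U$-parts of group elements so that every product $u\cdot w$ with $u \in V$ and $w \in \mathbb{M}_{b \times c}(K)$ arises as an honest commutator. Once this identification is made, the remaining steps reduce to a standard spanning estimate for random vectors.
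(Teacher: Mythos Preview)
Your argument follows the same route as the paper's: compute $[Q,\Gamma_2]$ (which you identify with $[Q,Q]$) in terms of $V=Q/\Gamma_2$, reduce the equality with $\Gamma_3$ to a spanning condition, and invoke Lemma~\ref{lm:prob-span}. Your reduction to the column-span condition $\sum_{u\in V}\im(u)=K^a$ is the correct one, since $V\cdot\mathbb{M}_{b\times c}(K)=\mathbb{M}_{a\times c}(K)$ is governed by the images of the $u\in V$ as maps $K^b\to K^a$; the paper's proof phrases this as a condition in $K^b$ with bound $q^{b-a\ell}$, which appears to interchange the roles of $a$ and $b$.

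There is, however, a slip in your last sentence. With $n=a$ and $s=b\ell$ the bound $q^{a-b\ell}$ is $o(1)$ precisely when $\ell>a/b$, not when $\ell>b/a$, and these thresholds differ once $a\neq b$. Concretely, take $a=2$, $b=c=1$, $\ell=1$: then $\ell>b/a=1/2$ holds, yet for every kind $Q$ with $|Q/\Gamma_2|=p$ one has $V=\mathbb{F}_p\cdot u$ for a single nonzero $u\in K^2$, so $[Q,Q]=K\cdot u\subsetneq K^2=\Gamma_3$ with probability $1$. Thus the stated hypothesis $\ell>b/a$ is not by itself sufficient when $a>b$. This does not affect the downstream application, since Theorem~\ref{thm:generic} assumes $\ell>2+\max(a/b,b/a)$ and hence both thresholds; but you should flag that the hypothesis you actually need for your bound is $\ell>a/b$.
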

\begin{proof}
    The condition $[Q,\Gamma_2] = \Gamma_3$ is equivalent to saying: the $K$ span
    of the elements in $Q/\Gamma_2$,  viewed as elements in
    $\mathbb{M}_{a\times b}(K)$, generate $K^b$. The columns of the generators
    of $Q/\Gamma_2$ are random vector in $K^b$; thus, it suffices that the total
    number of columns is at least $b$.  This happens when $\ell > b/a$.
    Following Lemma~\ref{lm:prob-span} this happen with probability at least
    $1 - q^{b - a\ell}$.

    As $ [Q,\Gamma_2] \leq [Q,Q] \leq [\Gamma_1,\Gamma_1] = \Gamma_3$, and
    with high probability $[Q,\Gamma_2] = \Gamma_3$, it follows that
    with high probability $[Q,Q] = \Gamma_3$.
\end{proof}

\begin{prop}\label{prop:generic-Q}
    Let $|K/k|=e$ and $abce>1$.  Let $Q$ range over kinder of the above
    nursery $\Gamma_*$ of $\mathcal{H}_{abc}(K)$ and
    $\iota:Q\hookrightarrow \mathcal{H}_{abc}(K)$ an arbitrary embedding.
    If $ae>2$ and $\ell\approx abe/2$, with high probability
    there is an $\alpha\in \Aut(Q)$ such that
    such that $\iota(\alpha(\Gamma_2)) = \Gamma_2$.
\end{prop}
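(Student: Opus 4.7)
The plan is to recognize $\Gamma_2$ as one half of a hyperbolic pair in the $2$-step nilpotent quotient of $Q$, then invoke the classification of such pairs via idempotents in the adjoint ring of the commutator form (\cite{BMW:genus2}*{Lemma~3.5}). Once it is shown that all hyperbolic halves of the correct ``type'' form a single $\Aut(Q)$-orbit, the subgroup $\iota^{-1}(\Gamma_2)$, being such a half, must equal $\alpha(\Gamma_2)$ for some $\alpha\in\Aut(Q)$, which gives $\iota(\alpha(\Gamma_2))=\Gamma_2$.

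First I would apply Proposition~\ref{prop:generic-Q-1} so that, generically, $[Q,Q]=\Gamma_3$. Since $\Gamma_4=1$, it follows that $\Gamma_3\leq Z(Q)$ and the commutator descends to an alternating $K$-bilinear form $b_Q:\bar{Q}\wedge\bar{Q}\to\Gamma_3$ on $\bar{Q}=Q/\Gamma_3$; both $\bar{\Gamma}_2$ and $\overline{\iota^{-1}(\Gamma_2)}$ are maximal totally isotropic subspaces, because each is the image of an abelian subgroup of $Q$ of order $|\Gamma_2|$. Writing the image of $Q/\Gamma_2$ inside $\mathbb{M}_{a\times b}(K)$ as the span of essentially random matrices and feeding them into Corollary~\ref{coro:prob-end}, the hypotheses $ae>2$ and $\ell\approx abe/2$ meet the codimension requirements of Theorem~\ref{thm:prob-end}; we deduce that with high probability $\End(b_Q)\cong\mathbb{M}_2(K)$ when $a=b$, and $\End(b_Q)\cong(K\oplus K)\ltimes J$ when $a\neq b$.

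Next I would invoke \cite{BMW:genus2}*{Lemma~3.5} to identify hyperbolic pairs of $\bar{Q}$ with complete systems of primitive orthogonal idempotents in $\End(b_Q)$. In $\mathbb{M}_2(K)$ all such systems are inner-conjugate, so every maximal isotropic subspace that arises as a hyperbolic half lies in one $\Aut(\bar{Q})$-orbit; in $(K\oplus K)\ltimes J$ there are two types of primitive idempotents (one per simple factor), and idempotents of matching type are conjugate modulo the radical. Because $Q$ is of class $2$ and exponent $p$, it is determined up to isomorphism by $(\bar{Q},\Gamma_3,b_Q)$, and so any $b_Q$-preserving automorphism of $\bar{Q}$ lifts to an element of $\Aut(Q)$, producing the desired $\alpha$.

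The main obstacle is the asymmetric case $a\neq b$: with a nontrivial radical $J$ present, one must check that $\bar{\Gamma}_2$ and $\overline{\iota^{-1}(\Gamma_2)}$ sit above the \emph{same} primitive idempotent of $\End(b_Q)$, rather than being orthogonal complements of a single pair. This reduces to matching the $K$-dimensions of the two totally isotropic subspaces, which is immediate since $\iota$ is a group isomorphism preserving $|Q/\Gamma_2|$ and $|\Gamma_2/\Gamma_3|$. Small-parameter exceptions, notably $a=c=1$ where the ambient automorphism group acquires the symplectic piece visible in the third line of the display in Corollary~\ref{coro:U-count}, are excluded by the hypothesis $ae>2$.
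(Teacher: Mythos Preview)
Your outline is the paper's own argument: invoke Proposition~\ref{prop:generic-Q-1} to get $[Q,Q]=\Gamma_3$, pass to the alternating commutator form on $Q/\Gamma_3$, translate hyperbolic pairs into idempotents of the adjoint ring via \cite{BMW:genus2}*{Lemma~3.5}, compute that ring generically with Corollary~\ref{coro:prob-end}, and then use conjugacy of the relevant idempotents. The dimension-matching you describe in the $a\neq b$ case is exactly how one distinguishes the two idempotent types.

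The one genuine gap is your lifting step. You claim $Q$ has class~$2$ and exponent~$p$, hence is determined by $(\bar Q,\Gamma_3,b_Q)$, so that every $b_Q$-isometry of $\bar Q$ lifts to $\Aut(Q)$. For $p=2$ this fails: with $g=I+X\in\mathcal{H}_{abc}(\mathbb{F}_{2^e})$ one has $g^2=I+X^2$, and $X^2$ is typically a nonzero element of $\Gamma_3$, so $Q$ has exponent~$4$. The squaring map $\bar Q\to\Gamma_3$ is then an additional invariant, and an arbitrary isometry of $b_Q$ need not respect it. The paper sidesteps this by never asserting that all isometries lift: it only needs the specific conjugations by $1+z$ with $(z,-z)\in J$ (or inner automorphisms in the $\mathbb{M}_2$ case), and for those it appeals to \cite{BMW:genus2}*{Theorem~3.15b} or a direct verification. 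A smaller inaccuracy: $b_Q$ is $k$-bilinear rather than $K$-bilinear, since $Q/\Gamma_2$ is merely an $\mathbb{F}_p$-subspace of $\mathbb{M}_{a\times b}(K)$; this is why the paper's proof reads $\End(\Lambda_*(Q))/J\cong k\oplus k$.
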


\begin{proof}
    By the Proposition~\ref{prop:generic-Q-1} we know that $\Gamma_3$ is the
    commutator subgroup of $Q$, which allows us to restrict $[,]_{\mathcal{H}}$
    to a biadditive map $[,]_Q:(Q/\Gamma_3)^{\otimes 2}\to \Gamma_3$. Note that
    $\Gamma_2$ is part of a hyperbolic pair $(E,F:=\Gamma_2)$ for $Q$.  So as
    described above, such decompositions are in bijective correspondence with
    so-called \emph{hyperbolic} idempotents $(e,1-e)\in \End(\Lambda_*(Q))$
    where $\Lambda_*(Q)$ is the coordinate representation of the $k$-bilinear map
    $[,]_Q$; see \cite{BMW:genus2}*{Lemma~3.5}. Following
    Corollary~\ref{coro:prob-end}, $\End(\Lambda_*(Q))/J\cong k\oplus k$ where
    $J$ is the Jacobson radical, or else $\End(\Lambda_*(Q))\cong \mathbb{M}_2(K)$.
    In the first case $\End(\Lambda_*(Q))/J$ has precisely
    two proper nontrivial idempotents $(e,1-e)$ and $(1-e,e)$.  By the lifting
    of idempotents, all proper nontrivial idempotents of $\End(\Lambda_*(Q))$
    are part of a hyperbolic pair.  Furthermore, all hyperbolic pairs are
    conjugate by some $1+z$, $(z,-z)\in J$.  Hence $1+z$ lifts to an
    automorphism of $Q$, which can be checked directly or compared with
    the argument in \cite{BMW:genus2}*{Theorem~3.15b}.  In the second case
    there are many proper nontrivial primitive idempotents but all are conjugate
    and one of them has the form $(e,1-e)$, so they all do.
\end{proof}

Once we know that $\Gamma_2$ and $\Gamma_3$ are isomorphism invariants for $Q$
we can consider the biadditive maps $[,]=[,]_Q : Q/\Gamma_2 \otimes
\Gamma_2/\Gamma_3 \to \Gamma_3$ and look at the algebra of operators
which act trivially on the $Q/\Gamma_2$ factor
    \begin{align*}
        \mathcal{R}_Q & = \left\{
            (g,h)\in \End(\Gamma_2/\Gamma_3)\times \End(\Gamma_3)
            \,\,\biggr\vert \,\,
            \forall q\in Q/\Gamma_2, v \in \Gamma_2/\Gamma_3,
                [q,gv] = h[q,v]
        \right\}.
    \end{align*}
Note that this definition is a permuted variant of $\End(\Phi_*)$ called the
right nucleus; see \cite{Wilson:Skolem-Noether}*{Section~1.1}. Since
$\Gamma_2/\Gamma_3$ and $\Gamma_3$ can be identified with spaces of matrices and
$Q/\Gamma_2$ can be embedded in a space of matrices, the  algebra
$\mathcal{R}_Q$ contains a copy of $\mathbb{M}_c(K)$ acting naturally on
$\Gamma_2/\Gamma_3$ and $\Gamma_3$.  As $\mathbb{M}_c(K)$ is Morita equivalent
to $K$ we can condense the system $\Phi_*$ of $(\ell\times bc)$-matrices to a
random system $\Phi_* E_{11}$ (here $E_{11}\in \mathbb{M}_c(K)$ is the matrix
with 1 in position 11 and 0 elsewhere).  The result is a random system of
$(\ell\times b)$ matrices where we may apply the counts of
Theorem~\ref{thm:prob-end} to conclude that generically
$\mathcal{R}_Q=K\otimes \mathbb{M}_c(K)$.

\begin{prop}\label{prop:generic-R}
    For generic $Q$ the algebra $R_Q\cong \mathbb{M}_c(K)$, if $\ell >2+ b/a$.
\end{prop}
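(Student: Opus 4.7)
The plan is to identify $\mathcal{R}_Q$ as a matrix ring over its corner $\varepsilon\mathcal{R}_Q\varepsilon$ (with $\varepsilon=E_{11}$), and then apply Theorem~\ref{thm:prob-end} to show that corner is $K$.

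First I would record the canonical embedding $\mathbb{M}_c(K)\hookrightarrow \mathcal{R}_Q$: for any $C\in\mathbb{M}_c(K)$, the pair $(v\mapsto vC,\, w\mapsto wC)$ on $\Gamma_2/\Gamma_3\cong\mathbb{M}_{b\times c}(K)$ and $\Gamma_3\cong\mathbb{M}_{a\times c}(K)$ lies in $\mathcal{R}_Q$ by associativity of matrix multiplication $q(vC)=(qv)C$. The standard matrix units $E_{ij}$ inside $\mathbb{M}_c(K)\subseteq\mathcal{R}_Q$ then yield a Peirce decomposition of $\mathcal{R}_Q$ with respect to the orthogonal idempotents $E_{ii}$, and conjugation by the $E_{ij}$ identifies all the blocks $E_{ii}\mathcal{R}_Q E_{jj}$, giving $\mathcal{R}_Q\cong\mathbb{M}_c(\varepsilon\mathcal{R}_Q\varepsilon)$ by the standard matrix-ring recognition (equivalently, Morita equivalence between $\mathbb{M}_c(K)$ and $K$).

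Next I would identify the corner. It acts on $(\Gamma_2/\Gamma_3)\varepsilon\cong K^b$ (the first column) and $\Gamma_3\varepsilon\cong K^a$ via the restricted bracket, which becomes matrix-vector multiplication by elements of $U=Q/\Gamma_2\subseteq\mathbb{M}_{a\times b}(K)$. The principal obstacle is that $\mathcal{R}_Q$ is defined via $\mathbb{F}_p$-endomorphisms, while $\End(\Phi_*)$ in Theorem~\ref{thm:prob-end} is $K$-linear. To bridge this gap I would use Galois decomposition: every $(g',h')\in\varepsilon\mathcal{R}_Q\varepsilon$ decomposes under $\mathrm{Gal}(K/\mathbb{F}_p)$ as $\sum_\sigma (g'_\sigma,h'_\sigma)$ with $\sigma$-semi-linear pieces, and since the $u_i$ are $K$-linear the defining relation $qg'(v)=h'(qv)$ splits Galois-equivariantly into pieces $\hom(\Phi_*,\sigma(\Phi_*))=\{(A,B)\in\mathbb{M}_a(K)\times\mathbb{M}_b(K)\mid Au_i=\sigma(u_i)B^t\}$. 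The identity piece is $\End(\Phi_*)$; the non-identity pieces cut out a linear system of $ab\ell$ conditions on $a^2+b^2$ parameters, generically trivial once $\ell>(a^2+b^2)/(ab)=b/a+a/b$, an inequality implied by the hypothesis $\ell>2+b/a$ (with $a\leq b$; the case $b<a$ is symmetric).

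Finally, Theorem~\ref{thm:prob-end}, applied with $s=\ell$, $m=a$, $n=b$ and $\ell>2+b/a$, yields $\End(\Phi_*)\cong K$ with probability $1-O(1/q)$. Combined with the generic vanishing of the non-trivial Galois twists, $\varepsilon\mathcal{R}_Q\varepsilon\cong K$. Assembling with the matrix-ring identification of the first step yields $\mathcal{R}_Q\cong\mathbb{M}_c(K)$ generically. The matrix-ring recognition and the direct application of Theorem~\ref{thm:prob-end} to the identity piece are routine; the substantive content is the Galois-equivariant reduction to a direct sum of $\hom(\Phi_*,\sigma(\Phi_*))$'s and the dimension count forcing the non-trivial twists to vanish, which is what the slightly strengthened hypothesis $\ell>2+b/a$ (rather than bare $\ell\geq 2+b/a$) buys.
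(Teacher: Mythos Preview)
Your approach is the paper's own: embed $\mathbb{M}_c(K)\hookrightarrow\mathcal{R}_Q$, condense by the idempotent $E_{11}$ (the paper says ``Morita equivalence'', you say ``Peirce decomposition''---same thing), and invoke Theorem~\ref{thm:prob-end} on the condensed system. You are in fact more careful than the paper in one place: the paper silently passes from the $\mathbb{F}_p$-linear endomorphisms that define $\mathcal{R}_Q$ to the $K$-linear setting of Theorem~\ref{thm:prob-end}, whereas you make this explicit via the crossed-product decomposition $\End_k(K^b)=\bigoplus_{\sigma\in\Gal(K/k)}\mathbb{M}_b(K)\,\sigma$.

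One caution. The parameter count ``$ab\ell$ conditions on $a^2+b^2$ unknowns'' for the $\sigma\neq\mathrm{id}$ pieces is a heuristic, not a proof: the identical count would falsely predict that the $\sigma=\mathrm{id}$ piece vanishes too. You need a witness. One is available: take the explicit $\mathbb{F}_p$-entry matrices $\Phi_0,\Phi_1,\ldots$ from the proof of Theorem~\ref{thm:prob-end} (these are $\sigma$-fixed) and adjoin one further matrix $\omega\Phi_0$ with $\omega$ a primitive element of $K/\mathbb{F}_p$; then for $\sigma\neq\mathrm{id}$ the pair of relations $B\Phi_0=\Phi_0A$ and $\sigma(\omega)B\Phi_0=\omega\Phi_0A$ forces $\Phi_0A=0$, whence $A=B=0$ by the identity-piece computation. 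The room for this one extra matrix is precisely what the strict inequality $\ell>2+b/a$ (rather than $\ell\geq 2+b/a$) affords, so your closing remark lands in the right place even though the stated reason---the bare dimension count---does not itself require strictness.
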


\begin{proof}[Proof of Theorem~\ref{thm:generic}]
    Let $Q$ and $\tilde{Q}$ be two kinder of $\mathcal{H}_{abc}$ and $\phi:Q\to \tilde{Q}$ an
    isomorphism. By Proposition \ref{prop:generic-Q} we may assume
    $\phi(\Gamma_2)=\Gamma_2$ and $\Gamma_3=[Q,Q]=[\tilde{Q},\tilde{Q}]$.

    By the definition of $\mathcal{R}_Q$ and Proposition~\ref{prop:generic-R}, the
    biadditive map $[,]_Q$ induces a linear map
    \begin{align*}
    \iota_Q : Q/\Gamma_2 \to \Hom_{R_Q} (\Gamma_2/\Gamma_3,\Gamma_3)
        \cong \hom_{\mathbb{M}_c(K)}(\mathbb{M}_{b\times c}(K),\mathbb{M}_{a\times c}(K))
        \cong \mathbb{M}_{b\times a}(K).
    \end{align*}
    The same applies to $\tilde{Q}$.

    In light of these identifications, $\phi$ induces an isomorphism
    $\Hom_{R_Q} (\Gamma_2/\Gamma_3, \Gamma_3)\cong \Hom_{R_{\tilde{Q}}}
    (\Gamma_2/\Gamma_3,\Gamma_3)$. Since both of these spaces can be identified
    with $\mathbb{M}_{a\times b}(K)$ then $\phi$ induces a $k$-linear
    automorphism of $\mathbb{M}_{a\times b}(K)$ which can be extended to an
    automorphism of $\mathcal{H}_{abc}$.
\end{proof}

\begin{remark}
    We can summarize the above steps in a generalized manner by
    considering a coordinate-free interpretation.  First the $(a\times b)$-matrices
    $(\Phi_1,\ldots,\Phi_c)$ are replaced with multilinear maps,
    or \emph{tensors}, $t=\sum_{i,j,k}[\Phi_k]_{ij} e_i\otimes e_j\otimes e_k$
    in $K^a\otimes K^b\otimes K^c$.
    The rings $\mathcal{M}$ and $\mathcal{R}$ are universal in that
    they are the largest faithful ring representations such that
    $t\in K^a\otimes_{\mathcal{M}} K^b\otimes_{\mathcal{R}} K^c$.
    E.g. the $(a,b,c)$-matrix multiplication tensor $t$ resides naturally
    in $\mathbb{M}_{a\times b}(K)\otimes_{\mathbb{M}_b(K)}\mathbb{M}_{b\times c}(K)
    \otimes_{\mathbb{M}_c(K)}\mathbb{M}_{c\times a}(K)$ and for
    generic $\tilde{Q}\leq \mathbb{M}_{a\times b}(K)$, the
    restriction $t|_Q$ only resides in
    $\tilde{Q}\otimes_{K} \mathbb{M}_{b\times c}(K)
    \otimes_{\mathbb{M}_c(K)}\mathbb{M}_{c\times a}(K)$.
    Theorem~\ref{thm:prob-end} considers the ring $K$, and
    Proposition~\ref{prop:generic-R} recovers $\mathcal{R}=\mathbb{M}_c(K)$.
    Corollary~\ref{coro:prob-end} is necessary since, instead of $t|_Q$,
    we first recover from commutation an element of
    $\wedge^2_{\mathcal{M}} (\tilde{Q}\oplus \mathbb{M}_{b\times c}(K))
    \otimes_{\mathbb{M}_c(K)} \mathbb{M}_{c\times a}(K)$ and
    from that the structure of the ring $\mathcal{M}$ permits us to
    reconstruct a generic tensor of $\tilde{Q}\otimes_{K} \mathbb{M}_{b\times c}(K)
    \otimes_{\mathbb{M}_c(K)}\mathbb{M}_{c\times a}(K)$.

    In general operators in $\End(K^a)\times\End(K^b)\times \End(K^c)$
    acting on $K^a\otimes K^b\otimes K^c$ are called \emph{transverse}
    tensor operators.  The above argument can be restated for a larger class
    of modules, e.g. any modules for which the rings above are Azumaya algebras,
    using this generalized point of view.
\end{remark}

\subsection{Proof of Theorem~\ref{thm:sigma-iota-general}}

We now consider how large numbers of nilpotent groups $G$ obtain the theoretical
upper bound on the size of $\sigma_{\iota}(G)$ and $\sigma(G)$.

By Corollary~\ref{coro:U-count} with $a\in b+O(1)$ and $c>1$ constant, there are
groups $S_p$ of order $p^{ab+bc+ac}=p^{b^2+O(b)}$ having $\sigma_{\iota}(S_p)\in
p^{b^4/4+\Theta(b^2)}$.  For the upper bound apply Wall \cite{Wall} to show that
$\sigma(p^{\nu})\in p^{\nu^2/4+O(\nu)}$.

For a larger family consider the
subgroups $\Gamma_2<K\leq \mathcal{H}_{abc}(\mathbb{F}_p)$ with $c>1$ constant and
$a=b=\lfloor\nu/3c\rfloor$.  Let $\log_p |K|=\nu=\ell+2bc$.  There are $p^{\ell (b^2-\ell)-O(b^2)}=
p^{\nu^3/27c^2+\Omega(\nu^2)}$ isomorphism classes of such subgroups.  Furthermore,
each subgroup has $p^{\ell^2/4 + \Theta(\ell)}\subset p^{\nu^2/36+\Theta(\nu)}$
pairwise non-isomorphic subgroups containing $\Gamma_2$.

To pass to nilpotent groups $G$, fix the direct decomposition $G=\prod_{p|n}S_p(G)$
into Sylow $p$-subgroups $S_p$.  Then every subgroup $H$ of order $k$ has
$H=\prod_{p|k}S_p(H)$ and $S_p(H)\leq S_p(G)$.  So the claims hold.\qed

\section{Classical Groups}

\begin{thm} \label{thm:classical}
    Fix a prime $p$.  For $G=A_{d-1}(p^e)=\PSL_{d}(\mathbb{F}_p^e)$
    \begin{align*}
        & (d\geq 3) &
        p^{(1/64-o(1))d^4e^2}
            & \leq 
            \sigma_{\iota}(G)\leq \sigma(G)\leq p^{(1/64+o(1))d^4e^2}.
    \end{align*}
    If $G$ is one of ${^2 A}_{2m-1}(p^e)=\PSU_{2m}(\mathbb{F}_{p^e})$, 
    $C_m(p^e)=\PSp_{2m}(\mathbb{F}_{p^e})$, or $D_m(p^e)=P\Omega^+_{2m}(\mathbb{F}_{p^{e}})$
    \begin{align*}
        & (5\leq m) & 
        p^{(1/64-\omega(1))m^4e^2} & \leq \sigma_{\iota}(G)
            \leq \sigma(G)\leq p^{(1/4-o(1))m^4 e^2};\\
        & (3\leq m\leq 4) & 
        p^{(1/4-\omega(1))e^2} & \leq \sigma_{\iota}(G)
            \leq \sigma(G)\leq p^{(1/4-o(1))m^4 e^2}.
    \end{align*}
    If $G$ is one of ${^2 A}_{2m}(p^e)=\PSU_{2m+1}(\mathbb{F}_{p^e})$,  
    $B_m(p^e)=\PSO_{2m+1}(\mathbb{F}_{p^e})$, 
    ${^2 D}_m(p^e)=P\Omega^-_{2m}(\mathbb{F}_{p^{e}})$
    \begin{align*}
        & (6\leq m) & 
        p^{(1/64-\omega(1))m^4e^2} & \leq \sigma_{\iota}(G) 
            \leq \sigma(G)\leq p^{(1/4-o(1))m^4 e^2};\\
        & (4\leq m\leq 5) & 
        p^{(1/4-\omega(1))e^2} & \leq \sigma_{\iota}(G)
        \leq\sigma(G)\leq p^{(1/4-o(1))m^4 e^2}.
    \end{align*}
\end{thm}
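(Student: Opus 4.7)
The plan is to handle each Lie type by the two-step strategy already used in the unipotent case: for the lower bound on $\sigma_{\iota}$, locate a generalized Heisenberg subgroup $\mathcal{H}_{abc}(K)$ inside the classical group with the right parameters and invoke Corollary~\ref{coro:U-count} (or, in the simplest cases, Corollary~\ref{coro:U_d}); for the upper bound on $\sigma$, invoke Wall's theorem~\cite{Wall} applied to a Sylow $p$-subgroup.

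Type $A$ is immediate. Since $U_d(\mathbb{F}_{p^e})$ embeds into $\PSL_d(\mathbb{F}_{p^e})$ (the center of $\SL_d$ meets $U_d$ trivially) and $|\PSL_d(\mathbb{F}_{p^e})|_p = |U_d(\mathbb{F}_{p^e})|$, both the lower bound and the matching upper bound of Theorem~\ref{thm:classical} for type $A$ transfer directly from Corollary~\ref{coro:U_d}.

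For the remaining types of rank $m$, the first move is to identify a $\GL_k$-Levi inside a Siegel-like parabolic. Stabilizing a maximal totally isotropic subspace yields a Levi isomorphic to $\GL_m(K)$ in $\PSp_{2m}(K)$, $\PSO_{2m+1}(K)$, and $P\Omega^+_{2m}(K)$; to $\GL_{m-1}(K)$ in $P\Omega^-_{2m}(K)$; and to $\GL_m(K')$ inside $\PSU_{2m}(K)$ and $\PSU_{2m+1}(K)$, where $K' = \mathbb{F}_{p^{2e}}$ is the quadratic extension. Inside each of these $\GL$-subgroups lies a copy of $U_k$ over the appropriate field, and within $U_k$ sits the generalized Heisenberg $\mathcal{H}_{aa1}$ with $a = \lfloor (k-1)/2 \rfloor$. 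Applying Corollary~\ref{coro:U-count} to this Heisenberg delivers $\sigma_\iota \geq p^{(1/64 - o(1))m^4 e^2}$ for sufficiently large $m$ (the base-field change in the unitary cases in fact produces a stronger constant). For the small-rank regime, where this quartic count is not yet activated, the group still contains the classical Heisenberg $U_3(K)\cong \mathcal{H}_{111}(K)$, and Corollary~\ref{coro:U_d} supplies the fallback lower bound $p^{(1/4 - o(1))e^2}$. For the upper bounds, a uniform argument applies: the Sylow $p$-subgroup of each rank-$m$ group has order $p^{\nu}$ with $\nu\in O(m^2 e)$, and Wall's theorem gives $\sigma(G) \leq p^{\nu^2/4 + O(\nu)} \leq p^{(1/4+o(1))m^4e^2}$.

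The main obstacle is bookkeeping rather than conceptual: one must correctly identify the Sylow $p$-orders and Levi decompositions for each Dynkin type, handle the base-change in the unitary cases, accommodate the slightly smaller Levi in type ${}^2 D_m$, and cleanly interpolate between the quartic regime and the fallback quadratic regime at the transition ranks (which are $m\in\{5,6\}$, depending on type). No tool beyond the apparatus of Sections~\ref{sec:nurseries} and~\ref{sec:lift} together with Wall's theorem is needed.
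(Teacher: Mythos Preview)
Your lower-bound strategy matches the paper's: embed $U_m(K)$ into the classical group (whether phrased via Witt extension or via the Levi of a Siegel parabolic, these give the same copy of $U_m$) and invoke the Heisenberg counts of Corollaries~\ref{coro:U_d} and~\ref{coro:U-count}. The small-rank fallback to $\mathcal{H}_{111}(K)\cong U_3(K)$ is likewise what the paper does implicitly for $m\in\{3,4\}$.

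The genuine gap is in the upper bounds. You assert that Wall's theorem yields $\sigma(G)\leq p^{\nu^2/4+O(\nu)}$ with $p^{\nu}$ the Sylow $p$-order of $G$, and that the upper bound in Corollary~\ref{coro:U_d} ``transfers directly'' to $\PSL_d$. Neither holds. Wall's theorem bounds the number of subgroups of a \emph{$p$-group} of order $p^{\nu}$; it says nothing about an arbitrary group whose Sylow $p$-subgroup has that order. And while the containment $U_d(K)\leq \PSL_d(K)$ pushes \emph{lower} bounds on $\sigma_{\iota}$ upward, it does nothing for \emph{upper} bounds on $\sigma$: $\PSL_d(K)$ has a great many subgroups that are not $p$-groups at all.

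What is missing is precisely Lemma~\ref{lem:upper}, which implements Pyber's technique. Every subgroup of $\GL_d(K)$ is generated by a solvable subgroup together with one further element (Aschbacher--Guralnick); each solvable subgroup lies in one of $2^{O(d)}$ conjugacy classes of maximal solvable subgroups (P\'alfy); and a maximal solvable subgroup has $p$-core $U\leq U_d(K)$ with quotient of order at most $p^{O(d\log d)}$ (Suprenenko's structure theory). Only at this last step does Wall's theorem enter, to bound the number of choices for $U$. Assembling these gives $\sigma(\GL_d(K))\leq p^{O(d^3 e\log d)}\,\sigma(U_d(K))$, which together with Corollary~\ref{coro:U_d} yields the tight $1/64$ upper bound for type $A$; the other classical groups embed in a suitable $\GL_d$ and inherit the bound. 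Without this bridge from arbitrary subgroups to unipotent ones, your upper-bound argument does not go through.
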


Our upper bounds follow the technique of Pyber in \cite{Pyber:enum}*{Section~3}
and are applied only in the case of type $A$.  We expect that tight bounds for other
classical groups require both an improved lower bound by inspecting their Sylow
$p$-subgroups in place of $U_d(K)$, as well as improving the upper bound by
inspecting the solvable subgroups of general classical groups. 

\begin{lemma}\label{lem:upper}
    For fixed $p$,  $\sigma(\GL_d(p^e))\leq 
    2^{O(d^2)}p^{d^2e}p^{O(d^3e\log d)} \sigma(U_d(K))$.
\end{lemma}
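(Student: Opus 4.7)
The plan is to stratify each subgroup $H \leq G := \GL_d(\mathbb{F}_{p^e})$ by its $p$-core $P := O_p(H)$ and use the bijection $H \leftrightarrow (P, H/P)$ to split the count into two factors. Since $H \leq N_G(P)$, we obtain
\begin{align*}
\sigma(G) \leq \#\{p\text{-subgroups of }G\} \cdot \max_{P}\#\{\bar H \leq N_G(P)/P : O_p(\bar H) = 1\}
\end{align*}
and bound each factor in turn, following Pyber's technique \cite{Pyber:enum}*{Section~3}.

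First I would bound the number of $p$-subgroups of $G$. By Sylow's theorem, every $p$-subgroup of $G$ is $G$-conjugate to a subgroup of $U_d(K)$; hence the number of $G$-conjugacy classes of $p$-subgroups is at most $\sigma(U_d(K))$, and each class has size at most $|G| < p^{d^2 e}$. Multiplying handles the first factor and contributes the $p^{d^2 e}\sigma(U_d(K))$ portion of the claimed estimate. For the second factor, subgroups $\bar H \leq N_G(P)/P$ with trivial $p$-core are essentially completely reducible by Borel--Tits and can be analyzed through Aschbacher's classification: a bounded number of "geometric" subgroup families will contribute the $2^{O(d^2)}$ factor, while subgroups supported on a Levi factor will contribute $p^{O(d^3 e \log d)}$. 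This latter exponent arises by iterating a $p^{O(d^2 e)}$ bound on subgroups of a rank-$d$ maximal torus along a Levi filtration of length $O(d)$, with a $\log d$ factor absorbing the number of distinct subgroup types at each step.

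The main obstacle is the second step — bounding subgroups of $N_G(P)/P$ with trivial $p$-core — since it requires invoking either CFSG or Aschbacher's subgroup classification, and a careful enumeration along a parabolic or Levi chain to produce the $p^{O(d^3 e \log d)}$ term. The $p$-core decomposition reduces the problem to (nearly) reductive quotients, but the nontrivial work lies in bounding subgroups of these quotients using only their structure as subgroups of $\GL_d(K)$ rather than an explicit generating set. Once the structural count is in place, multiplying the two bounds yields the claimed inequality.
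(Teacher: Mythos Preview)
Your stratification by $P=O_p(H)$ and the resulting inequality are correct, and the bound on the number of $p$-subgroups via Sylow is fine. The gap is entirely in the second factor. A subgroup $\bar H\leq N_G(P)/P$ with $O_p(\bar H)=1$ need not be anything like a torus or a toral extension: after pushing $\bar H$ into a Levi $L\cong \prod_i \GL_{d_i}(K)$ (which Borel--Tits does give you), $\bar H$ can still be essentially all of $L$, of order $p^{\Theta(d^2e)}$. So there is no ``$p^{O(d^2e)}$ bound on subgroups of a rank-$d$ torus'' to iterate; you are back to counting arbitrary subgroups of smaller $\GL$'s, and your sketch gives no mechanism to close that recursion within the target bound. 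Invoking Aschbacher's $\mathcal{C}_1$--$\mathcal{C}_8$ families does not help either, since those classify \emph{maximal} subgroups and say nothing about how many subgroups sit below each one.

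The paper's argument sidesteps this by never trying to control $H/O_p(H)$ directly. Instead it uses the Aschbacher--Guralnick theorem that every $H$ is generated by a solvable subgroup $S$ together with a single extra element, and then counts solvable $S$. For solvable linear groups one has genuine smallness above the $p$-core: by P\'alfy there are only $2^{O(d)}$ conjugacy classes of maximal solvable $M\leq \GL_d(K)$, and by Suprunenko's structure theory $|M/O_p(M)|\leq p^{O(d\log d)}$, so $M$ is generated by $O_p(M)$ plus $O(d\log d)$ further elements. This is exactly what fails for your $\bar H$: without solvability, $|\bar H|$ is not small, and you cannot replace it by a short list of generators. The $2^{O(d^2)}$, $p^{d^2e}$, and $p^{O(d^3e\log d)}$ factors in the statement come respectively from P\'alfy's class count, the conjugate and extra-generator choices inside $G$, and the $O(d\log d)$ generators each ranging over a group of size $p^{O(d^2e)}$.
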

  
\begin{proof}
    By work of Aschbacher-Guralnick \cite{BNV:enum}*{Theorem~16.4},
    every group $H$ is generated by a solvable subgroup $S$ and one more element
    $g\in S$.  So first we enumerate the number of solvable subgroups of
    $\GL_d(p^e)$ by first selecting a conjugacy class of a maximal solvable
    subgroup $M$.  P\'alfy shows there are at most $A=2^{O(d)}$ such
    classes \cite{BNV:enum}*{Theorem~14.1} and each conjugacy class has 
    order at most $B=|\GL_d(p^e)|\leq p^{(1-o(1))d^2e}$.  Let $U$ be the 
    $p$-core of $M$. We claim $|M/U|\leq p^{2d\log d}$.  Consequently, 
    $M=\langle U,g_1,\ldots,g_{\lceil 2d\log d\log p\rceil}\rangle$. 
    The maximum size of a solvable group is $C=p^{(1/2-o(1))d^2 e}$ attained by 
    the minimal Borel subgroups.   
    
    Altogether, each solvable subgroup $S$ resides in one of the $A$ many
    conjugacy classes of maximal solvable group $M$, so that $M$ is one of at
    most $B$ conjugates one of which contains $S$ and $U\leq U_d(K)$.  There are
    $\sigma(U_d(K))$ choices of $U$ and $C^{2d\log d \log p}$ choices of $g_i$.
    So the number of choices of $S$ is 
    \begin{align*}
        ABC^{2d\log d \log p}\sigma(U_d(K)).
    \end{align*}

    To see the bound on $|M/U|$ we proceed by Suprenenko's structure theory of
    solvable matrix groups (compare \cite{BNV:enum}*{pp.~127--128}).  Let
    $V_1>\cdots>V_{c+1} =0$ be a composition series for $V=K^d$ as an
    $M$-module.  Then $\bar{V}:=\bigoplus_i V_i/V_{i+1}$ is a semisimple
    $M$-module $\bar{M}=M/\ker_M \bar{V}$ has trivial unipotent normal
    subgroups, i.e. $\ker_M\bar{V}$ is the $p$-core $U$.  Now 
    $\bar{M}$ embeds into  $T\rtimes \Sigma$ where $T\leq \prod_{i=1}^c
    E_i\rtimes \Gal(E_i/k)$ with $E_i/K$ field extensions with $\sum_{i=1}^c
    |E_i:K|=d$, and $\Sigma\hookrightarrow \mathrm{Sym}_c$.  So $|\bar{M}|\leq
    p^{\sum_i e_i \log_2 e_i} c!$. So the claim holds.
\end{proof}

\begin{proof}
    For type $A_{d-1}(p^e)$ use the natural embedding of $U_d(K)\subset \PSL_d(K)$
    and Corollaries~\ref{coro:U_d} \& \ref{coro:U-count}.  For the remaining classical groups use Witt's
    Extension Lemma to embed $U_m(K)$ into $\Isom(\phi)$ where $\phi$ is a
    sesquilinear or quadratic form on $K^d$ where $2m\leq d\leq 2m+2$ and $m$ is the 
    Witt index of $\phi$ over the finite field $K$.  From this
    observe that this embedding has determinant $1$ and factors through
    $G=PS\Isom(\phi)$.  In all but the orthogonal groups, specifically $\PSO_d(K)$, this
    leads to a finite simple group of classical type. In the orthogonal groups
    we further take commutator subgroup of $G$.  When $d\geq 12$ and $m\geq 5$, by
    Corollary~\ref{coro:U_d}, $\sigma_{\iota}(G)\geq p^{(1/1024-o(1))d^4e^2}$.
    The remaining bounds concern $m\in \{3,4\}$, adjusted in the case of
    $\Omega^-$.

    For the upper bounds we apply Lemma~\ref{lem:upper}.
\end{proof}

\begin{remark}
    It is difficult to estimate the value of $\mu(n)$ when $n=|\PSL_d(p^e)|$
    because of the accumulation of small prime divisors of $(p^{ke}-1)$ can lead
    to primes $r$ where $\nu_r(n)>\nu_p(n)=\binom{d-1}{2}$; cf.
    Remark~\ref{rem:Mersenne}.  However, for many $d$, $\mu(n)=\nu_p(n)$ and in
    those cases the bound $\sigma(n)\leq n^{\mu(n)+1}$ implies that our above
    bounds have $\sigma(\PSL_d(p^e))$ attaining the asymptotic bound
    $\sigma(G)$ as $G$ ranges over all groups of order $n=|\PSL_d(p^e)|)$ for
    such $n$.  Jeff Achter has suggested to us that this happens for infinitely many
    $d$, perhaps even for dense set of dimensions $d$.
\end{remark}

\section{Alternating groups}

Next we estimate the isomorphism types of subgroups of the alternating groups.
Pyber \cite{Pyber:enum}*{Corollary~2.3} has shown that alternating groups
$\mathrm{Alt}_k$ of order $n=k!/2$ have
\[
    2^{k^2/16+\Omega(k)}\leq \sigma(\mathrm{Alt}_k)\leq 24^{(1/6+o(1))k^2}.
\]
By a formula of Legendre, if $p$ is prime and $p^e|k!$ then $e=\sum_{i>0} \lfloor n/p^i\rfloor$
and so
$e < \frac{k}{p-1}$.  Therefore, $\mu(k!)< k$ and so
\[
    \sigma(k!/2) \in 2^{\Theta(k^2)}.
\]
Therefore on an asymptotic log scale, alternating groups attain the maximum
possible $\sigma$.  We now prove the same for $\sigma_{\iota}$.
\begin{thm}\label{thm:alt}
    The group $\mathrm{Alt}_k$ has at least $2^{k^2/36 + \Omega(n\log n)}$ isomorphism types
    of subgroups.  In particular, for alternating and symmetric groups $G$ of order $n$,
    $\log_2 \sigma_{\iota}(G)\in \Theta(\log_2 \sigma(n))$.
\end{thm}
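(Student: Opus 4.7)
The plan is to realize inside $\mathrm{Alt}_k$ a $2$-subgroup $H$ of order $2^{\Theta(k)}$ admitting an exact module nursery as in Section~\ref{sec:nurseries}, so that Theorem~\ref{thm:nursery-count} and Corollary~\ref{coro:U-count} supply $\sigma_\iota(H) \geq 2^{k^2/36 + \Omega(k)}$. The natural candidate is a subgroup of the Sylow $2$-subgroup $S_2$ of $\mathrm{Alt}_k$: for $k=2^n$, $S_2$ is (up to an index $2$ adjustment) the iterated wreath product $W_n = C_2 \wr \cdots \wr C_2$ of order $2^{k-1}$, and in general $S_2$ is a direct product of such $W_{n_i}$ according to the binary expansion of $k$. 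The goal is to locate inside $S_2$ a copy of one of the kinder $K \leq \mathcal{H}_{abc}(\mathbb{F}_2)$ produced in the proof of Theorem~\ref{thm:sigma-iota-general}, with $a \approx b \approx k/6$ and $c$ a small constant, for which the nursery count already delivers $\sigma_\iota(K) \geq 2^{k^2/36 + \Omega(k)}$.

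The technical heart is embedding such a $K$ into $\mathrm{Sym}_k$ with permutation degree at most $k$. The natural matrix action of $\mathcal{H}_{abc}(\mathbb{F}_2)$ on $\mathbb{F}_2^{a+b+c}$ has exponential degree $2^{\Theta(k)}$, far too large, so I would instead produce a faithful permutation representation of $K$ of degree polynomial in $\log_2|K|$ by exhibiting a cofaithful subgroup of $K$ of small index --- specifically, by starting with the coset action of $K$ on its maximal abelian subgroup $\Gamma_2$ and augmenting it with a faithful action of the central layer $\Gamma_3$ on $O(\log_2|\Gamma_3|)$ auxiliary points. A single transposition then adjusts the image to lie in $\mathrm{Alt}_k$. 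With $K \leq \mathrm{Alt}_k$ in place, Theorem~\ref{thm:nursery-count} applies mechanically, yielding $\sigma_\iota(\mathrm{Alt}_k) \geq \sigma_\iota(K) \geq 2^{k^2/36 + \Omega(k)}$.

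To upgrade the lower-order term from $\Omega(k)$ to $\Omega(k\log k)$, I would aggregate analogous contributions from the Sylow $p$-subgroups of $\mathrm{Alt}_k$ for all small primes $p$. Since a nontrivial $p$-group is never isomorphic to a nontrivial $q$-group when $p \neq q$, iso-types arising from distinct Sylow subgroups are disjoint and sum on the logarithmic scale. By Legendre's formula each Sylow $p$-subgroup of $\mathrm{Alt}_k$ has order $p^{\Theta(k/(p-1))}$ and, assuming the analogous kinder argument goes through inside each, contributes an additional $\Omega((k/(p-1))\log_2 p)$ to $\log_2 \sigma_\iota$. Summing via the prime harmonic $\sum_{p \leq k}(\log p)/p \sim \log k$ gives the extra $\Omega(k\log k)$. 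The main obstacle throughout is the embedding step --- verifying that the specific kinder $K$ admits a faithful permutation representation of degree $O(k)$, and that the corresponding Sylow $p$-subgroups for other primes admit the analogous nursery structure --- since the iso-type count itself follows mechanically once $K$ is realized inside $\mathrm{Alt}_k$.
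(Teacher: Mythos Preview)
Your approach is entirely different from the paper's and has a fatal gap at the embedding step.

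The paper never touches Heisenberg $2$-groups. It embeds $\Gamma = (\mathrm{Sym}_3)^{m}$ into $\mathrm{Sym}_{3m}\hookrightarrow\mathrm{Alt}_{3m+2}$ and counts subgroups $H$ with $C_3^m=[\Gamma,\Gamma]\leq H\leq\Gamma$. These correspond to subspaces of $\Gamma/[\Gamma,\Gamma]\cong\mathbb{F}_2^m$, and the decisive isomorphism invariant is the Hamming weight on $H/[\Gamma,\Gamma]$, recovered intrinsically from $|\langle[h,g]:g\in C_3^m\rangle|=3^{\varpi(\bar h)}$. Hence the iso-types of such $H$ are bounded below by the number of length-$m$ binary linear codes modulo $\mathrm{Sym}_m$, namely $2^{m^2/4}/m!$; taking $m\approx k/3$ yields the $k^2/36$ leading constant and the $\Omega(k\log k)$ secondary term simultaneously, with no prime-aggregation needed.

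Your construction misses by an exponential factor. With $a=b\approx k/6$, $c$ constant, $|K|=2^{\nu}$ and $\nu\approx k$, the kinder parameters force $\ell=\dim_{\mathbb{F}_2} K/\Gamma_2\approx\nu/3$, so the coset action of $K$ on $K/\Gamma_2$ has degree $2^{\nu/3}\approx 2^{k/3}$, not $O(k)$. Nor is this easily repaired by another choice of subgroup: since $Z(K)=[K,K]=\Gamma_3$, any point-stabiliser $H$ with $H\cap\Gamma_3=W$ a hyperplane satisfies $[H,H]\leq W$, so $H/W$ is an abelian subgroup of $K/W$; the matrix-product commutator on these kinder makes $K/W$ close to extraspecial for most $W$, forcing $[K:H]$ to be exponential in $\nu$. (Contrast $D_8^{m}$, where the commutator decouples coordinatewise, the coordinate hyperplanes have huge radical, and degree $4m$ suffices---this is precisely the $2$-group variant the paper mentions in its remark.) Separately, your aggregation over primes is also wrong as written: iso-types from distinct Sylow subgroups form disjoint families, so the total is a \emph{sum}, and on the log scale you recover only the maximum over $p$, not $\sum_p \Omega(k/(p-1))\log p$.
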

\begin{proof}
Similar to the count above for classical groups we proceed by counting subgroups
within a fixed group.  An obvious choice might be to consider Sylow
$2$-subgroups, however, we obtain a suitable lower bound by instead counting
with the groups $\Gamma:=\Gamma_k$ of a direct sum of $k$ copies of the
symmetric group on $3$ letters.  This acts on $3k$ points as a union of $k$
orbits.  So $\Gamma_k$ embeds in the symmetric group $\mathrm{Sym}_{3k}$ and in
turn into $\mathrm{Alt}_{3k+2}$.%
\footnote{We can pass to a subgroup of index $2$ in $\Gamma_1$ which embeds in $\mathrm{Alt}_{3k}$.}
We will show that for large $k$,
\[
    \sigma_{\iota}(\Gamma_k)\in 2^{k^2/4 + \Omega(k\log k)}.
\]

Let us consider the subgroups $H$ containing the subgroup $\Gamma_2:=[\Gamma,\Gamma] = C_3^k$.
Notice that the subgroup $\Gamma_2$ is characteristic in $H$ and the quotient
$\Gamma/\Gamma_2$ is naturally identified with $\mathbb{F}_2^k$ and comes with
Hamming distance $\varpi: \Gamma/\Gamma_2 \to [0,\dots, k]$ counting the number
of components where the corresponding coordinate is nonzero.

Now we cannot use this function directly since its definition  depends on the
embedding of $H$ into $\Gamma$. Instead, observe that the elements of
$H/\Gamma_2$ come with their action of $\Gamma_2$ and for each $h \in H$ we have
\[
    |\langle [h, g] \mid g\in \Gamma_2\rangle | = 3^{\varpi(\bar h)}
\]
This shows that the restriction of the Hamming distance to $H/\Gamma_2$ can be
determined only by the isomorphism type of the group $H$.

Finally, up to code equivalence, the number of binary codes of degree $k$ and
dimension $e$ is at least $2^{e(d-e)}/k!$.  This is maximized at $e=k/2$ where
we get $2^{k^2/4+\Omega(k\log k)}$.
\end{proof}

\begin{remark}
    Another way to rephrase this argument is that we view $\mathbb{F}_2^k$ as
    the maximal slit torus in $\GL_k(\mathbb{F}_3)$, the subgroup $H$
    corresponds to a subgroup of the torus (together its action on
    $\mathbb{F}_3^k$. If this subgroup is sufficiently large then it has no
    repeated eigenvalues and it can be diagonalized in only one way (up to a
    permutation matrix) which gives a subspace of $\mathbb{F}_2^k$ modulo the
    action of $S_k$. Since the number of subspaces is of the order of
    $2^{O(k^2)}$ which is significantly larger than $|S_k|$, this leads to a
    lower bound for the number of isomorphism types of subgroup which is of the
    order of $2^{O(k^2)}$.
\end{remark}

\begin{remark}
    The is a variant of this construction using $2$-groups 
    -- instead of working with $S_3$ one can use $D_8 \subset S_4$. In
    this case the analog $\Gamma_2$, the group $C_4^k$ has slightly more complicated
    definition (since it is not a Sylow subgroup)%
    \footnote{There is a unique element $c$ in the center of $H$, such that the set $\{g \in H | g^2 =c\}$ has exactly $2^k$ elements.
    The group $\Gamma_2$ is generated by all elements $g\in H$ such that $g^2=c$.}.
    This  leads to at least
    $2^{n^2/64 + \Omega(n\log n)}$ isomorphism types of $2$-subgroups inside $\mathrm{Alt}_n$.

    This can be generalized further to $p$-groups for odd primes $p$ using $C_p \wr
    C_p \subset S_{p^2}$, in this case the analog of $\Gamma_2$ is defined as the
    unique elementary abelian subgroup of $H$ of size $p^{kp}$.
\end{remark}

\section{Small Rank and Exceptional Groups}

We now consider the finite simple groups not covered in Theorems~\ref{thm:classical}
\& \ref{thm:alt}.

\subsection{Small rank counts}
We turn now to the language Steinberg groups.  Indeed the method for Steinberg
groups can be used for classical groups as well albeit with less sharp bounds.
We recall terminology from Steinberg, see \cite{CFSG}.

We consider the subgroup $U_{\Delta}(R)$ of the Steinberg group
generated by the root subgroups corresponding to the positive roots, and we let
$G_{\Delta}(R)$ be the corresponding Steinberg group.  Note that $U_{\Delta}(R)$
is nilpotent and any element in it can be written uniquely as a product
$\prod_{\alpha \in \Delta^+} e_\alpha(r_\alpha)$. There are variations of this
construction for twisted root systems, if the ring $R$ has a suitable
automorphism, also in certain case these groups can be defined when the ring $R$
is non-commutative.

The cases not covered in the previous section are all groups with bounded Lie
rank.  So our aim is to prove that, on a log scale,
$\sigma_\iota(G_{\Delta}(R))$ is comparable $\sigma(R,+)$ when the rank of
$\Delta$ is bounded.  Since $|G_{\Delta}(R)|\in |R|^{O(1)}$ the results will
follow.

If the Dynkin diagram for the root system $\Delta$ contains
$A_2$ as a subdiagram then $U_\Delta(R)$ contains $U(3,R)$ as a subgroup, thus
we can use the results for classical groups to deduce that the Heisenberg group
$U_\Delta(\mathbb{F}_{p^e})$ contains at least $p^{\Omega(e^2/4)}$ isomorphism types
of subgroups.  This bound is easy to apply (and as in Remark~\ref{rem:best-bounds-U}
it applies only once $e\geq 9$).  Tighter bounds would appear to require detailed
study of the subgroups of unipotent groups of exceptional and twisted groups.
We encourage such work but do not pursue it here.

Excluding the groups in Theorem~\ref{thm:classical} and groups of Lie type with
a diagramatic embeddings of $A_2$ excludes all but the following cases: $A_1$,
${}^2A_2$, ${}^2A_3$, ${}^2A_4$, $B_2$, ${}^2B_2$, ${}^3D_4$, ${}^2F_4$, $G_2$,
${}^2G_2$.  The groups ${}^2 B_2$, ${^2 F}_4$, and ${^2 G}_2$ exist only in for
fields of orders $2^{1+2n}$, $2^{1+2n}$, and $3^{1+3n}$ respectively.  We can
avoid going over all cases, by relaxing the condition of containing $A_2$ as a
subdiagram, to the containment of the corresponding root systems (maybe over a
slightly smaller field).  This leaves only
\begin{align*}
    A_1,\quad {}^2A_2,\quad B_2,\quad {}^2B_2,\quad {}^2F_4,\quad {}^2G_2.
\end{align*}
We now cover these cases.

\subsection{Type $A_1$ ($\PSL_2(\mathbb{F}_{p^e})$)}

The Sylow $p$-subgroups of $\PSL_2(\mathbb{F}_{p^e})$ are isomorphic to
$\mathbb{Z}_p^e$ and so $\sigma(\PSL_2(\mathbb{F}_{p^e})\in p^{\Omega(e^2)}$.
Since $|\PSL_2(\mathbb{F}_{p^e})|\in p^{\Theta(e^2)}$ this bound is sufficient
for our estimates. However the $p$-subgroups are elementary abelian and so
$\sigma_{\iota}(\mathbb{Z}_p^e)=e+1$. In fact the subgroups of
$\PSL_2(\mathbb{F}_{p^e})$ have been classified. The maximal subgroups are
either upper triangular, dihedral, or $\mathrm{Alt}_5$.  The isomorphism types of subgroups
of $\mathrm{Alt}_5$ is bounded, and the subgroups of dihedral groups are dihedral or
cyclic.  Both of these groups are characterized up to isomorphism by their
orders so they contribute at most $\log |\PSL_2(\mathbb{F}_{p^e})|$ distinct
isomorphism classes. Finally the group of upper triangular matrices is isomorphic
to $K^{\times}\ltimes K$.  If $H\leq K^{\times}\ltimes K$ then $H\cap U_2(K)$ is
normal in $H$ and $|H:H\cap U|=|HU:U|$ divides $|K^{\times}|$ is prime to $p$.
So $H\cap U$ is the Sylow $p$-subgroup of $H$.  As $H$ is solvable, it
has a Hall $p'$-subgroup $Q$ and so $H=Q\ltimes_{\rho} (H\cap U)$.  Furthermore $Q$ is
drawn from the subgroups of the cyclic group $K^{\times}$ and $H\cap U$ is drawn from
subspaces of $(\mathbb{Z}/p)^e$.  Therefore the isomorphism of subgroups of a fixed
order determined by the conjugacy classes of the images of the maps
$\rho:\mathbb{Z}/m\to K^{\times}\hookrightarrow \GL_e(\mathbb{F}_p)$.  Such images are conjugate
if they have the same order.  So in total $\sigma_{\iota}(\PSL_2(\mathbb{F}_{p^e}))\in O(e^2\log p)$.

\begin{remark}\label{rem:Mersenne}
    If we constrain the rank and exponent and allow only the prime to vary then in
    general the diversity of subgroups of simple groups is severely limited.  For
    instance, for every Mersenne prime $p=2^k-1$, $n=|\PSL_2(\mathbb{F}_p)|$ has $\mu(n)\geq
    k+1$ and so $\sigma_{\iota}(n)\in 2^{\Theta((\log n)^2)}$.  Yet
    $\sigma_{\iota}(\PSL_2(\mathbb{F}_p))\in O(\log n)$.
\end{remark}

\subsection{Type ${}^2A_2$ ($\mathrm{PSU}_3(\mathbb{F}_{p^{2e}})$)}
We note that for $p>3$, the Sylow $p$-subgroups of $\mathrm{PSU}_3(\mathbb{F}_{p^{2e}})$ are
isomorphic to the Sylow $p$-subgroups of $\mathrm{PSL}_3(\mathbb{F}_{p^e})$.
However, an estimate for all $p$ is to use $F=\mathbb{F}_{p^{2e}}$ with
quadratic field involution $\sigma$. To apply Theorem~\ref{thm:nursery-count}
use $R = F^\sigma = \{ \alpha \in F \mid \alpha=\alpha^*\}$, $M=\{\alpha \in F
\mid \alpha+\alpha^* = 0\}$, ${\sf S}=\{1,\omega\}$ where
$F=\mathbb{F}_{p^e}[\omega]$ and ${\sf T}$ is any non-zero element in $M$.
Then apply Theorem~\ref{thm:nursery-count}.

\subsection{Type $B_2$, $\mathbf{char}\,F\neq 2$ (${\rm
PSO}_5(\mathbb{F}_{p^e})$)} Again this follows form
Theorem~\ref{thm:nursery-count} this time with $R=M= F$, notice that the natural
commutator map is not the usual multiplication, but $(r,m)\to 2 rm$.  However if
the characteristic is not equal to $2$, then the multiplication is isotopic to
the usual one. An other way ro rephrase this is to say that $U_{B_2}(F)$ contain
a subgroup isomorphic to $U_3(F)$.

\subsection{Type $B_2$, $\mathbf{char}\, F =2$ $({\rm PSO}_5(2^e))$}
The main difference is between
$B_2(F)$ when ${\rm char}~F\neq 2$ and ${\rm char}~F=2$ is that the nilpotency
class of group $U:=U_{\Delta}$ drops form $3$ to $2$ and the center becomes
larger. This prevents us from applying Theorem~\ref{thm:nursery-count} which
needs three steps to form the nursery. However, we can  modify the proof of that
theorem to split the center as a direct sum.

In this case $U$ is an extension of $F^2$ by $F^2$ where the
commutator bi-map is:
$$
[(r,s),(\tilde{r},\tilde{s})] = (r\tilde{s} - \tilde{r}s, r\tilde{s}^2 - \tilde{r}s^2).
$$
We will also work with the quadratic map $\phi:F^2 \to F^2$ given by $\phi((r,s)) =
(rs,rs^2)$.

The new series we consider for our nursery is
\begin{align*}
    \Gamma_1 & = U\\
    \Gamma_2 & = \{((r,0), z) \mid r\in F, z\in Z(U)\}\\
    \Gamma_3 & = Z(U)=[U,U]U^2\cong \{(r,s)\mid r,s\in F\}\\
    \Gamma_4 & = \{(0,s)\mid s\in F\}\\
    \Gamma_5 & = 1.
\end{align*}
To be precise, we use the epimorphisms $\alpha:U/\Gamma_3\to F^2$ and the
isomorphism $\beta:Z(U)\to F^2$; so, $\Gamma_2=\alpha^{-1}(F,0)$ and
$\Gamma_4=\beta^{-1}(0,F)$.  Once more we shall be interested in kinder
$Q$ where $\Gamma_2\leq Q\leq \Gamma_1$ and we shall need to expose what data in
addition to the structure of $Q$ recovers $\Gamma_i$.

Let $F=\mathbb{F}_2[\omega]$ and consider kinder $Q$ that contain,
$\alpha^{-1}(\{(\omega^i,0)\mid i\in \{-1,0,1\}\}$).
This condition implies that $Z(Q)= [Q,Q] = \Gamma_3$.
The quadratic map $\phi:Q/\Gamma_3 \to \Gamma_3$ has two maximal totally singular subspaces
(since $\phi((r,s)) = 0$ if, and only if, $r=0$ or $s=0$) and the preimage of larger
one in $Q$ is the subgroup $\Gamma_2$.  Thus $\Gamma_2\leq Q$ can be characterized
as the largest elementary abelian subgroup of $Q$.  Hence, $\Gamma_2$ and $\Gamma_3$
are isomorphism invariants of kinder $Q$ containing $\alpha^{-1}\{(\omega^i,0)\mid i\in \{-1,0,1\}\}$.

The first step is to construct enough elements in $\Gamma_1/\Gamma_2$ that will
allow us to identify this space with the field $F$.  For $i\in \{-1,0,1\}$ choose
representatives $A_i\in \alpha^{-1}(\omega^i,0)$ and $B_i\in \alpha^{-1}(0,\omega^i)$.
We proceed by inductively defining further coset representatives $A_i$,
by appealing to the following recurrence relation:
$$
[A_{k+1},B_{-1}][A_k,B_0] = [A_{k-2},B_1][A_{k-1},B_0].
$$
This definition depends only on the initial choice of $A_i$, and $B_i$ for $i\in
\{-1,0,1\}$, and commutation in $Q$ -- which is an isomorphism invariant.

The elements $A_i$ allow us to identify $\Gamma_1/\Gamma_2$ with the field $F$,
but after that to decompose the center $\Gamma_3$ as a direct sum $F\oplus F$.
The subgroup $\Gamma_4$ can be characterized as the subgroup generated by
$[A_{k+1},B_{-1}][A_k,B_0]$ when $k$ varies,  similarly we can identify its
complement as the subgroup generated by $[A_{k+1},B_{-1}][A_{k-1},B_0]$ when $k$
varies. Finally we can identify both $\Gamma_3/\Gamma_4$ and $\Gamma_4$ with $F$
by sending the $[A_k,B_0]\Gamma_0$ to $\omega^k$ and $[A_{k+1},B_{-1}][A_k,B_0]$
to $\omega^k + \omega^{k-1}$. After all these we can map $Q$ to a additive
subgroup of $F$ by sending $Q$ to image of $[Q,A_0]$ inside $\Gamma_3/\Gamma_4$.

Thus, the number of isomorphism types of subgroups of $Q$ is at least the number
of additive subgroups of $F$ which contain $1,\omega,\omega^{-1}$ divided by the
number of possible choices for the elements $A_i$ and $B_i$ (only modulo
$\Gamma_3$). Since $A_i$ and $B_i$ are constrained to be in the two totally
singular subspaces, the number of choices for the $A_i$ is $p^{3e}$ and for the
$B_i$ is $|Q/\Gamma_3|^3$.

Therefore the number of isomorphism types of subgroups of $Q$ such that
$|Q/\Gamma_2| = 2^\ell$ is at least at least $2^N$ where
$$
N = (e-\ell)( \ell-3) - 3e - 3\ell
$$
This is maximized when $\ell \approx e/2$ and gives $N = O(e^2/4)$
This bound is trivial for $e\leq 24$ and becomes nontrivial once $e > 24$.

\subsection{Type ${}^2B_2(\mathbb{F}_{2^{2e+1}})$, Suzuki groups}
Theorem~\ref{thm:simple} purposefully omits Suzuki groups because we have
no proof of a bound for this case.  What we include here is a reduction of the
enumeration to a purely field property which we have verfied computationally
for all fields $\mathbb{F}_{2^{1+2e}}$ with $1+2e\leq 1001$.  This suggests
to us that Suzuki groups indeed also fit a bound of the type reported in
Theorem~\ref{thm:simple}.
\begin{quote}
    Assume that in a field $\mathbb{F}_{2^{1+2e}}$ there is a subset
    $S$ such that $|S|\leq 3\sqrt{1+2e}$ and
    $\{xy^{2^{e+1}}-yx^{2^{e+1}}\mid x,y\in S\}$
    contains a basis for $\mathbb{F}_{2^{1+2e}}$.
\end{quote}

The group $U=U_{\delta}(F)$ is an extension of the additive group of $F$ by $F$
where the square is given by the $\mathbb{F}_2$-quadratic map
$$
    \phi(x) =x^{1+2^{e+1}}
$$
(after the standard identification of the center and the abelianization with $F$).

Fix a minimal $\mathbb{F}_2$ subspace $V$ of $F$, such that the image of $V$
under $\phi$ spans the whole $F$ as an $\mathbb{F}_2$ vector space, which by our
assumption permits $\dim V\leq 3\sqrt{1+2e}$.  Fix a basis $v_1,\dots,v_f$ for
$V$. (Evidently $\dim V\geq \sqrt{1+2e}$ at minimum.  The size of $V$ is the
so-called \emph{Sims rank} of $U$; cf. \cite{BNV:enum}*{Section~5.2}.)

We will count the subgroups $H$ of $U$ whose projection into the $U/[U,U]$ contains
the space $V$, together with specified elements $h_1,\dots h_f$ which project to
$v_i$ in $U/[U,U]$. Taking the squares of all possible products of $h_i$ we can
identify the subgroups $U^2$ (which is equal to $[U,U]$) with the field $F$ (of
course this is only possible if all these elements satisfy the necessary linear
relations), the important observation is that this identification does not
depend on the embedding of $H$ in $U$). Using this identification we can
identify the quotient $H/H^2$ with the subspace of $F$  which contains $V$. The
number of such spaces is of the order of $2^{(2e+1-f)^2/4}$ and at most
$2^{(2e+1+f)f/2}$ of these correspond to the same isomorphism type  (the number
of choices of $h_i$ modulo $H^2$). Thus the number of isomorphism types of
subgroups of $U$ is at least $2^N$ where
$$
N = e^2 - 2ef = e^2 - O(e^{3/2})
$$

As in the other cases this bound is trivial for the first few Suzuki groups, and
as we stress it is proved under the above assumption on fields of order $2^{1+2e}$.

\subsection{Type ${}^2F_4(\mathbb{F}_{2^{2e+1}})$, large Ree groups}
These groups contain a Heisenberg subgroup over $F$ and so a
suitable lower bound on the number of isomorphism types comes from the above
count of Heisenberg groups; cf.~\cite{CFSG, Ti}.

\subsection{Type ${}^2G_2(F_{3^{2e+1}})$, small Ree groups}
Finally, the small Ree groups contain a subgroup with the same associate graded Lie ring as the  Heisenberg group
(this can be seen from the commutation relations in~\cite{HSvM})  over $F$ and  we may appeal to Theorem~\ref{thm:nursery-count}
with $R=M= F$.  If $F = F_{3^{2e+1}}$ this leads to about $3^{(2e-1)^2/4 - 8e-4}$
different isomorphism types of subgroups in $U$ while the number of subgroups is
bounded above by $3^{(9-o(1))e^2}$.


\begin{bibdiv}
\begin{biblist}

\bib{Aschbacher}{article}{
   author={Aschbacher, M.},
   title={On the maximal subgroups of the finite classical groups},
   journal={Invent. Math.},
   volume={76},
   date={1984},
   number={3},
   pages={469--514},
   review={\MR{746539}},
}

\bib{BNV:enum}{book}{
   author={Blackburn, Simon R.},
   author={Neumann, Peter M.},
   author={Venkataraman, Geetha},
   title={Enumeration of finite groups},
   series={Cambridge Tracts in Mathematics},
   volume={173},
   publisher={Cambridge University Press, Cambridge},
   date={2007},
   pages={xii+281},
   review={\MR{2382539}},
}

\bib{BF}{article}{
   author={Bayer-Fluckiger, Eva},
   title={Principe de Hasse faible pour les syst\`emes de formes quadratiques},
   language={French},
   journal={J. Reine Angew. Math.},
   volume={378},
   date={1987},
   pages={53--59},
   review={\MR{895284}},
}

\bib{Brahana}{article}{
   author={Brahana, H.\ R.},
   title={Metabelian groups and trilinear forms},
   journal={Duke Math. J.},
   number={1},
   year={1935},
   pages={185--197},
}

\bib{BHRD}{book}{
   author={Bray, John N.},
   author={Holt, Derek F.},
   author={Roney-Dougal, Colva M.},
   title={The maximal subgroups of the low-dimensional finite classical
   groups},
   series={London Math Soc. Lecture Note Series},
   volume={407},
   publisher={Cambridge University Press, Cambridge},
   date={2013},
   pages={xiv+438},
   review={\MR{3098485}},
}

\bib{BW:isom}{article}{
   author={Brooksbank, Peter A.},
   author={Wilson, James B.},
   title={Computing isometry groups of Hermitian maps},
   journal={Trans. Amer. Math. Soc.},
   volume={364},
   date={2012},
   number={4},
   pages={1975--1996},
   review={\MR{2869196}},
}
\bib{BMW:genus2}{article}{
   author={Brooksbank, Peter A.},
   author={Maglione, Joshua},
   author={Wilson, James B.},
   title={A fast isomorphism test for groups whose Lie algebra has genus 2},
   journal={J. Algebra},
   volume={473},
   date={2017},
   pages={545--590},
   review={\MR{3591162}},
}

\bib{Dynkin:group}{article}{
   author={Dynkin, E. B.},
   title={Maximal subgroups of the classical groups},
   language={Russian},
   journal={Trudy Moskov. Mat. Ob\v s\v c.},
   volume={1},
   date={1952},
   pages={39--166},
   review={\MR{0049903}},
}

\bib{HSvM}{article}{
    AUTHOR = {Haot, Fabienne},
    AUTHOR = {Struyve, Koen},
    AUTHOR = {Van Maldeghem, Hendrik},
     TITLE = {Ree geometries},
   JOURNAL = {Forum Math.},
  FJOURNAL = {Forum Mathematicum},
    VOLUME = {23},
      YEAR = {2011},
    NUMBER = {1},
     PAGES = {75--98},
      ISSN = {0933-7741},
   MRCLASS = {51E25 (20B25 51E05 51E12)},
  MRNUMBER = {2769865},
MRREVIEWER = {W. M. Kantor},
}

\bib{GG}{article}{
   author={Goldstein, Daniel},
   author={Guralnick, Robert M.},
   title={Alternating forms and self-adjoint operators},
   journal={J. Algebra},
   volume={308},
   date={2007},
   number={1},
   pages={330--349},
   review={\MR{2290925}},
}
\bib{CFSG}{book}{
   author={Gorenstein, Daniel},
   author={Lyons, Richard},
   author={Solomon, Ronald},
   title={The classification of the finite simple groups. Number 3. Part I.
   Chapter A},
   series={Mathematical Surveys and Monographs},
   volume={40},
   publisher={Amer. Math Soc., Providence, RI},
   date={1998},
   pages={xvi+419},
   isbn={0-8218-0391-3},
   review={\MR{1490581}},
}

\bib{KL}{book}{
   author={Kleidman, Peter},
   author={Liebeck, Martin},
   title={The subgroup structure of the finite classical groups},
   series={London Mathematical Society Lecture Note Series},
   volume={129},
   publisher={Cambridge University Press, Cambridge},
   date={1990},
   pages={x+303},
   isbn={0-521-35949-X},
   review={\MR{1057341}},
}

\bib{Pyber:enum}{article}{
   author={Pyber, L.},
   title={Enumerating finite groups of given order},
   journal={Ann. of Math. (2)},
   volume={137},
   date={1993},
   number={1},
   pages={203--220},
   review={\MR{1200081}},
}

\bib{Ti}{article}{
    AUTHOR = {Tits, J.},
     TITLE = {Moufang octagons and the {R}ee groups of type
              {$^{2}F_{4}$}},
   JOURNAL = {Amer. J. Math.},
  FJOURNAL = {American Journal of Mathematics},
    VOLUME = {105},
      YEAR = {1983},
    NUMBER = {2},
     PAGES = {539--594},
      ISSN = {0002-9327},
   MRCLASS = {20G15 (51B25)},
  MRNUMBER = {701569},
MRREVIEWER = {Mark A. Ronan},
       DOI = {10.2307/2374268},
       URL = {https://doi-org.proxy.library.cornell.edu/10.2307/2374268},
}

\bib{Tyburski}{thesis}{
   author={Tyburski, Brady A.}
   title={Asymptotic enumeration of matrix groups},
   date={2018},
   note={Thesis delivered to Dept. Math., The Colorado State University, https://hdl.handle.net/10217/191288},
}

\bib{Wall}{article}{
   author={Wall, G. E.},
   title={Some applications of the Eulerian functions of a finite group},
   journal={J. Austral. Math. Soc.},
   volume={2},
   date={1961/1962},
   pages={35--59},
   issn={0263-6115},
   review={\MR{0125156}},
}
	

\bib{Wilson:division}{article}{
   author={Wilson, James B.},
   title={Division, adjoints, and dualities of bilinear maps},
   journal={Comm. Algebra},
   volume={41},
   date={2013},
   number={11},
   pages={3989--4008},
   issn={0092-7872},
   review={\MR{3169502}},
}


\bib{Wilson:Skolem-Noether}{article}{
   author={Wilson, James B.},
   title={On automorphisms of groups, rings, and algebras},
   journal={Comm. Algebra},
   volume={45},
   date={2017},
   number={4},
   pages={1452--1478},
   issn={0092-7872},
   review={\MR{3576669}},
}
\bib{Wilson:profiles}{article}{
   author = {Wilson, James B.},
   title = {The Threshold for Subgroup Profiles to Agree is Logarithmic},
   year = {2019},
   pages = {1--25},
   publisher = {Theory of Computing},
   journal = {Theory of Computing},
   volume = {15},
   number = {19},
   URL = {http://www.theoryofcomputing.org/articles/v015a019},
}

\end{biblist}
\end{bibdiv}

\end{document}